    \newtheorem{theorem}{Theorem}
    \newtheorem{proposition}{Proposition}
    \newtheorem{lemma}{Lemma}
    \newtheorem{remark}{Remark}
    \newtheorem{corollary}{Corollary}
    \newtheorem{definition}{Definition}
\newcommand{\eps}{\varepsilon}
\newcommand{\C}{\mathcal{C}}
\newcommand{\A}{\mathcal{A}}
\newcommand{\D}{\mathcal{D}}
    \newcommand\CC{\hbox{C\kern -.58em {\raise .54ex \hbox
    {$\scriptscriptstyle |$}}
    \kern-.55em {\raise .53ex \hbox{$\scriptscriptstyle |$}} }}
      \newcommand\qd{\hfill$\sqcap\kern-8.0pt\hbox{$\sqcup$}$}
    \newcommand\NN{\hbox{I\kern-.2em\hbox{N}}}
       \newcommand\nn{\hbox{I\kern-.2em\hbox{N}}}
    \newcommand\RR{I\!\!R}
    \newcommand\sRR{{\sl \hbox{I\kern-.2em\hbox{R}}}}
    \newcommand\QQ{\hbox{I\kern-.53em\hbox{Q}}}
\newcommand\sign{\hbox{Sign}}
\newcommand\signp{{\hbox{Sign}^+}}
 \newcommand\spo{\hbox{Sign}_0^+}
 \newcommand \smo{\hbox{Sign}_0^-}
    \newcommand\heps{{\mathcal H}_\varepsilon}
  \newcommand\hepsp{{\mathcal H}_\varepsilon^+}
\begin{document}
\title[$L^1$-Theory for Hele-Shaw Flow]
{$L^1$-Theory for  Hele-Shaw Flow   with linear Drift}
 \newcommand{\cs}{$^\dagger$} \newcommand{\cm}{$^\ddagger$}

\author[N. Igbida]{Noureddine Igbida\cs}

% Pour souligner un fond de recherche ou un titre.
\thanks{\cs  Institut de recherche XLIM, UMR-CNRS 7252, Facult\'e des Sciences et Techniques,  Universit\'e de Limoges, France. E-mail : {\tt noureddine.igbida@unilim.fr}}

%\dedicatory{\vskip.5em \sc\fontsize{10}{10pt}\selectfont  Institut de recherche XLIM-DMI, UMR-CNRS 7252,  Facult\'e des Sciences et Techniques \\ Universit\'e de Limoges, France }

% L'adresse de l'auteur.
\newcommand{\nfont}{\fontshape{n}\selectfont}

%\address{({\nfont\textbf{Noureddine IGBIDA}}) Institut de recherche XLIM-DMI, UMR-CNRS 7252, Facult\'e des Sciences et Techniques, Universit\'e de Limoges, France.}
%%\curraddr{....}
%\email{noureddine.igbida@unilim.fr}

\date{\today}
%****************************************************************************

\maketitle

\begin{abstract}
	The main goal of this paper  is to prove   $L^1$-comparison and contraction  principles for weak solutions   of PDE system corresponding to a phase transition diffusion model of	Hele-Shaw type  with addition of a linear drift.
The  flow is considered with  a source term  and subject to mixed homogeneous boundary conditions : Dirichlet and Neumann.  The PDE   can be focused to model for instance  biological applications  including multi-species   diffusion-aggregation models and pedestrian dynamics with congestion.   Our approach  combines  DiPerna-Lions renormalization  type with Kruzhkov device of doubling and de-doubling variables.  The  $L^1$-contraction principle allows afterwards to handle the problem in a general framework of nonlinear semigroup theory in $L^1,$ taking thus advantage of this strong theory to study existence, uniqueness, comparison of weak solutions, $L^1$-stability  as well as many further questions. 
 \end{abstract}
\noindent  \textbf{Journal:} Mathematical Models and Methods in Applied Sciences, Vol. 33, No. 07, 1545-1576 (2023)\\
\url{https://doi.org/10.1142/S0218202523500355}  

\section{Introduction and preliminaries}
\setcounter{equation}{0}
%%%%%%%%%%%%%%%%%%%%%%%%%%%%%

% Introdcution and main contribution
\subsection{Introduction and main contributions}

Let $\Omega\subset\RR^N$ be a bounded   open set.     We are  interested in the existence and  uniqueness as well as the $L^1$-comparison principle for the weak solution  concerning the  PDE of the type
\begin{equation}\label{pdetype}
  \left\{
	\begin{array}{l}
		\displaystyle \frac{\partial u }{\partial t}  -\Delta p +\nabla \cdot (u  \: V)= f \\    \\
		\displaystyle u \in \sign(p)\end{array}\right.  \quad  \hbox{ in } Q:= (0,T)\times \Omega.
		\end{equation}
Here,   $\sign$ is the maximal monotone graph defined in $\RR$ by
$$\sign(r)= \left\{ \begin{array}{ll}
	\displaystyle 1 &\hbox{ for any }r>0\\  
	\displaystyle [-1,1]\quad & \hbox{ for } r=0\\
	\displaystyle -1 &\hbox{ for any }r<0, \end{array}\right. $$
$V$ and $f$ are the given  velocity field and source term respectively, satisfying besides assumptions we precise next.  In the case of nonnegative solution (one phase problem), the problem may be written in the widespread  form
\begin{equation} \label{pdetype+}
	\left\{
	\begin{array}{l}
		\displaystyle \frac{\partial u }{\partial t}  -\Delta p +\nabla \cdot (u  \: V)= f \\    \\
		\displaystyle  0\leq u\leq 1,\:  p\geq 0,\:  p(u-1)=0 \end{array}\right.  \quad  \hbox{ in } Q.
\end{equation}

The linear version of the problem which corresponds to the case where  $\sign$'s graph is replaced by the identity ; that is to take $p=u,$ the problem matches with Fokker-Planck equation.  Existence,  uniqueness and stability of weak solutions for this case   is studied in \cite{Figali} with possibly  linear degenerate diffusion, additive noise and $BV-$vector fields  $V$ in $\RR^N.$  One can see moreover the work  \cite{LebrisLions} where the case of  irregular coefficient  and the associated stochastic differential equation  are treated.    

Nonlinear versions of Fokker-Planck equation   cover  several   mathematical models  in biological applications and pedestrian flow.   The system \eqref{pdetype}  is a density dependent flow model  which may be obtained following   hydrodynamical approach  for collective motion.  Examples may be found in \cite{BellomoAl}. 
In the PDE system \eqref{pdetype}, the density constraint $\vert \rho\vert \leq 1$    is strongly connected to   the microscopic non-overlapping constraint between the agents, and its coupling in a second order equation with linear drift is natural in many settings.  
This becomes widespread in the  description of dynamics with congestion, particularly in pedestrian flow (cf. \cite{MRS1}) and in 
biological applications  including multi-species   diffusion-aggregation models (cf. \cite{Car} and the references therein). 
Indeed, even if the transport equation (linear or nonlinear) remains to be   the master equation for these type of phenomena,   one needs to go with second order terms   to perform some local behavior likely connected to  the  ‘thinking’ concept of the  agents (cf. \cite{Hugues}).    In general, the paths followed by the agents in   transport equation are chosen a priori  independently from their local dynamics.   This may lead to greater accumulation of agents obstructing the non overlapping constraint $\vert \rho\vert \leq 1$.  The introduction of the second variable $p$ with   the complementary condition $\rho\in \sign(p)$ typically allows to avoid this obstruction and describes  the  motion of congested zones.   Roughly speaking, while the drift  manages the dynamic  of the crowd with an overview   vector field  $V$, the second order term gives rise to patch the dynamic  in the congested zones  with a local view    looking out to    allowable  neighbors positions. 	A typical example of this point of view remains to be   the 	constrained diffusion-transport equation  which was performed   in the pioneering work  by B.  Maury and al.   (cf.  \cite{MRS1})  using a gradient flow in the Wasserstein space of probability measures.   In \cite{MRS1}, this coupling  was performed  for one population leading to one phase Hele-Shaw problem ; i.e. the   main actor for the dynamic remains to be the density of lonely population $0\leq \rho\leq 1$. However, the case of two-species  in inter-actions and occupying the same habitat,  like  diffusion-aggregation models,  may be described by    two-phase Hele-Shaw problem  of the type \eqref{pdetype},  where the unknown $\rho$ represents through its positive and negative parts  the densities of each specie respectively. The source term $f$ can model reaction phenomena connected  to agent  supply  in biological models. This happens in particular when one deals with reaction diffusion system coupling the equation    \eqref{pdetype} with other PDE. As to the boundary condition, we'll focus on homogeneous (null)  mixed one. Neumann boundary condition  is connected to the absence of crossing  boundary possibilities ; i.e. no mass goes through the boundary. And, Dirichlet one for $p$ is connected to the possibility of crossing other part of the boundary (exits) without any charge. One can see \cite{EIG} for other possibilities of boundary conditions and their interpretations. 

In general, the problem is of elliptic-hyperbolic-parabolic type. Despite the broad  results  on  this class of nonlinear PDE,  the structure of  \eqref{pdetype} and  \eqref{pdetype+}  excludes them definitely  out the scope  of the current literature, at least   concerning uniqueness (cf. \cite{AltLu,AnIg1,AnIg2,Ca,IgUr,Ot}, see also the expository paper \cite{AnIgSurvey} for a complete list of refs).  In spite of the ''hard'' non-linearity connected to the  sign graph, the    Fokker-Planck look  of the equation with linear drift seems to be very fruitful for the analysis based  on gradient flow in the space of probability measures equipped with Wasserstein distance (the distance arising in the Monge-Kantorovich optimal transport problem) in the one phase case  \eqref{pdetype+}  with  no-flux Neumann boundary condition and no-source ; i.e.  $f\equiv 0.$  Numerous results on the existence, properties and estimates on the weak solution were elaborated in the last decade  (one can see for instance \cite{MS} and the references therein).   Other interesting  progress  on qualitative properties of a   solution was obtained using the notion of viscosity  solution (one can see for instance \cite{AKY} and the references therein). The key utility of the viscosity concept   is the  skill to describe the pointwise behavior of the free boundary evolution.  Nonetheless, the uniqueness is still an open problem and seems to be   hard in general. In the case of monotone velocity field $V,$ the uniqueness of absolute continuous solution in the set of probability equipped with Wasserstein distance  has been obtained craftily in \cite{DM}  for \eqref{pdetype+}, in the framework of gradient flow in euclidean Wasserstein space  again in the case of Neumann boundary condition and $g\equiv 0$.

Our main results  in this  paper concern  the  $L^1$-comparison and contraction  principles   for the  diffusion-transport equation of the type \eqref{pdetype} subject to mixed homogeneous boundary conditions : Dirichlet and Neumann. This allows to handle  the problem in the context of nonlinear semigroup theory in $L^1(\Omega)$  to prove  thus  existence, uniqueness, comparison of weak solutions, $L^1$-stability  as well as many other questions. We'll not cover  all the possibilities with this framework here, but let us mention at least its prospect  to tackle the continuous dependence with respect to the $\sign$ graph, including for instance  the connection between this problem and the so called incompressible limit of the porous medium equation even in the singular case.    This subject would be likely treated  in forthcoming works. 
 We tackle the problem using a renormalization approach of  DiPerna-Lions type   (cf.   \cite{DiLions}) combined with Kruzhkov device of doubling variables.   We treat the problem in the case of outgoing vector field velocity on the boundary which remains practically useful for many applications like in crowd motion.  On the hand, this condition seems to be optimal in the case of Dirichlet boundary condition (see the counter-example  in Remark \ref{RcondV}).  
%  Although,  we do believe that our arguments may be generalized and used for other situation with reasonable regularity. They can be extended  also to the case where $\Omega=\RR^N$ as well as to more general maximal monotone graph $\beta$ instead of $\sign.$ 
 
 Among the propose of this paper, let us mention the treatment of  the one phase problem \eqref{pdetype+}  which will be concerned as well  with sufficient condition on $V$ and $f.$  For the application in  crowd motion, the   conditions  may be  heuristically connected to the reaction of the concerned population at the position $x\in \Omega$ and time $t>0$ within   ''congested''  circumstance.    Performing its value  regarding to the divergence of the velocity vector field   $V$ may avoid the congestion.

 %%%%%%%%%%%%%%%%%%%%%
  % Introdcution and main contribution
 \subsection{Motivation and related works}
The nonlinear equation of the type \eqref{pdetype} is usually called Hele-Shaw Flow. Indeed, in the case where $V\equiv 0,$ the equation is a  free-boundary problem modeling the evolution of  a  slow  incompressible viscous  fluid  moving  between  slightly  separated plates  (cf. \cite{ElOk,Ri,SaTa,ElJa,DiFr,Vi} for physical and mathematical formulation).  The equation appears also as the viscous incompressible limit of  the porous medium equation (see for instance \cite{Igshaw,GQ,BeIgsign} and the references therein).

The study of the Hele-Shaw flow with a reaction term (without drift) goes back to \cite{BeIgsing} and \cite{BeIgNeumann} in the study of the  limit of the $m-$porous medium equation as $m\to\infty$. Concrete models with a dissimilar typical reaction term appeared after in the study of the tumor   growth (cf. \cite{PQV1}). The emergence of  a drift in the  Hele-Shaw flow appeared for the first time  in the study of congested crowd motion (cf. \cite{MRS1,MRS2,MRSV}).  In this case, the function $u$ is assumed  to be nonnegative to model  the density of  population and the constraints on $u$  prevents  an evolution beyond a given threshold.

Completed with boundary condition (Dirichlet, Neumann or mixed Dirichlet-Neumann) and initial data, the questions of existence and uniqueness of a weak solution is well understood by now in the case where $V\equiv 0$ (one can see for instance the papers \cite{BeIgsing,BeIgNeumann} for the case where $f=f(.,u)$, and  \cite{PQV1} for a different  structure of a reaction term). For general $V,$  though the existence of a solution seems to be more or less well understood, the uniqueness question  seems to be very delicate and challenging. The main difficulties  comes from the combination between the  ''hard'' non-linearity $\sign$ (so called hard-congestion) and the arbitrary pointing of  velocity field  for the drift.  Although the first order term is linear, one notices the hyperbolic character of the equation (outside the congested region).  This motivates the questions of existence and uniqueness of a weak solution,  and challenges new as well as  standard  techniques for first order hyperbolic  equations  and second order hyperbolic-parabolic structures as well. In particular, one sees that setting   $$\displaystyle
b=\left(I+{\sign}^{-1}\right)^{-1}\quad \hbox{ and }\quad  \beta=\left(I+\sign\right)^{-1},$$
$\displaystyle v:=u -p$,  the problem \eqref{pdetype} falls into the scope of    diffusion-transport equation of the type
 \begin{equation} \label{pdetypemod}
 \displaystyle \frac{\partial b(v) }{\partial t}  -\Delta \beta(v)  +\nabla \cdot ( V(x)\Phi(v) )= f \quad  \hbox{ in } Q:= (0,\infty)\times \Omega.
 \end{equation}
The study of this class of degenerate parabolic problems has under-gone a considerable progress in the last   twenty  years, thanks to the fundamental paper of J. Carrillo \cite{Ca}  in which the Kruzhkov device of doubling of variables was extended to this class of hyperbolic-parabolic-elliptic problems.  In \cite{Ca},  the appropriate notion of  solution was established : the so called weak-entropy solution. This definition (or, sometimes, parts of the uniqueness techniques of \cite{Ca}) led to many developments, we refer  to the expository paper \cite{AnIgSurvey} for a survey and complete references on this subject.  As  much as restrictive  it is, the approach of  \cite{Ca} serves out primarily and  ingeniously  the case where $V$ does not depend on space and  homogeneous Dirichlet boundary condition.  Strengthened with $L^1$-nonlinear semi-group techniques (cf. \cite{AnIgSurvey}),  the approach of  \cite{Ca}  enables  to  achieve successful as well the uniqueness of weak solution in the ''weakly degenerate''  convection-diffusion problems of parabolic-elliptic type ; that is  $[\beta=0] \subseteq  [\Phi = 0] $.  As far as  Dirichlet boundary condition is concerned (cf. \cite{Ca,IgUr,AnIg1,AnIg1,AnIgSurvey}), the case of homogeneous Neumann  was  treated in \cite{AnBo}.  Yet, the notion of entropic solution  \`a la Carillo is definitively the suitable notion for general  problem of the type \eqref{pdetypemod}. Nevertheless, following the theory of  hyperbolic  equation,  the case of linear drift is a particular case for which one expects the uniqueness of weak solution.   As   far  as  we  are  aware  of,   this  case remains to be out of the scope of the current literature on this subject.   

In spite of the serving of the second order term, we can not   pass over the forcefulness  of  the first order term, which is here linear and carrying  away along a vector field $V$ with an arbitrary pointing in $\Omega.$  Heuristically, in the likely case  $p\equiv 0,$  the PDE  turns into a linear initial-boundary value problem for continuity equation. In general, one guess the uniqueness of weak solution under reasonable assumption on $V$, like  bounded total variation coefficients.  As far as the regularity of $V$ is involved, the way how to handle the boundary condition is hooked to the proofs of  uniqueness and  adduce different difficulties to the problem.  Also, the treatment of boundary conditions is  a crucial step in the proof of uniqueness.  This is closely connected in some sense to the regularity of the solution as well as to the pointing direction of $V$ on the boundary. This  affects the manner to treat the weak/strong trace of the flux on the boundary (see the papers \cite{Ambrosio}, \cite{ACM}, \cite{Anzellotti}, \cite{ChenFrid}, \cite{ChenTZ} and also \cite{ACM}, \cite{AnIg2},  \cite{AnBo}). The approach based on the concept of renormalized solution introduced in 1989 by Lions and DiPerna \cite{DiLions} for tangential velocity field seems to be  unmissable  and powerful in general for the proof of   uniqueness of weak solution for this kind of problem.   This concept was extended to bounded domains in $\RR^N$ with inflow  boundary conditions and velocity field  with a kind of Sobolev regularity in \cite{Boyer},  \cite{BoyerFabrie} and \cite{Mischler}.  These results were generalized to velocity fields  with BV regularity in \cite{Ambrosio,AC,ACM,CDS}.  The concept was used also in  \cite{Figali} and \cite{LebrisLions} to tackle the existence and uniqueness of weak solutions for Fokker-Planck equation and its associated stochastic differential equation in some extreme cases like linear degenerate diffusion with irregular coefficient, additive noise and  $BV-$vector fields  $V$. 

In this paper, we show how to use this approach in the presence of second order term of Hele-Shaw type to prove the uniqueness of weak solution in the contest of mixed Dirichlet-Neuman boundary condition with an outward pointing    velocity fields $V$ on the boundary.  Otherwise, one needs   to be more careful with the treatment of  flux trace  on the boundary.  For instance, in the case of purely Dirichlet boundary condition we'll see that we can loose the uniqueness in the absence of outward pointing boundary condition on $V.$  
%Based on the analysis conducted in this paper, we do believe   that it is possible  adapt carefully our techniques to the case of strict inward pointing with the additional assumption  $u\: V\cdot \nu = 0$ on the boundary,  which  can be heuristically interpreted as (a weak version of) $u=0$ on the boundary. 

 %%%%%%%%%%%%%%%%%%%%%
% Introdcution and main contribution
\subsection{Plan of the paper}
  In the following subsection, we give the main assumptions we'll use throughout the paper and the definition of weak solution we are dealing with.  
Section 2, is devoted to the $L^1$-comparison principle for the weak solution. We introduce  renormalized like formulations and use them with 
doubling and de-doubling variable techniques à la Kruzhkov inside $\Omega,$  to prove first that weak solutions satisfies some kind of local Kato's inequalities.    Then, linking the outgoing  assumption on the velocity field  $V$ on  the boundary with  the distance-to-boundary function, we prove that we can go with our Kato's inequalities up to the boundary proving thus  $L^1$-comparison principle, and then deduce the uniqueness. 
In section 3, we prove the existence of a weak solution by using nonlinear semigroup theory governed by $L^1$-accretive operator. Here the main ingredient is to use the $L^1$-contraction principle for weak solution of  stationary problem associated with the $\epsilon-$Euler implicit time discretization of the  evolution problem. Then, we pass to the limit in the so called $\eps-$approximate solution  and prove that the limit is the weak solution of the evolution problem. 
  Section 4  is  devoted to some remarks, comments  and possible  extensions. 
  At last, in Section 5, for completeness  we give a  complement for the proof of de-doubling  variables process to handle space dependent vector field with arbitrary pointing out.

%%%%%%%%%%%%%%%%%%%%%%%%%%%%%%%%%% 
 \subsection{Preliminaries, remarks and    main assumptions}\label{sectionprem}

 We assume that   $\Omega\subset\RR^N$ is a bounded  open set, with regular boundary  of class $\C^2,$ splitted into regular partition $\partial \Omega=\Gamma_D\cup \Gamma_N,$ such that $\Gamma_D\cap \Gamma_N=\emptyset$ and 
 $$\mathcal L^{N-1}(\Gamma_D)> 0.$$
 We consider $H^1_0(\Omega)$ (resp. $H^1_D(\Omega)$) the usual  space of functions in the Sobolev space $H^1(\Omega),$  with  null trace on the boundary $\partial\Omega $ (resp. $\Gamma_D).$    
  For any $h>0,$  we denote by 
\begin{equation}\label{xih}
	 \xi_h(t,x)=\frac{1}{h}\min\Big\{h,d (x,\partial \Omega)\Big\} \quad \hbox{ and } \quad \nu_h(x)=-\nabla \xi_h , \quad \hbox{ for any }x\in \Omega,
	\end{equation} 
where $d(.,\partial \Omega)$  names  the euclidean distance-to-the-boundary function.
 We see that    $    \xi_h \in H^1_0(\Omega) $ is    concave, $0\leq \xi_h\leq 1$ and
\begin{equation}\label{nuh}
	 \nu_h(x) =  - \frac{1}{h}\nabla\:  d(.,\partial \Omega) ,\quad \hbox{ for any }x\in \Omega \hbox{ s.t. }d(x,\partial \Omega)<h\leq h_0 \hbox{ (small enough)}. \end{equation} 
 In particular, for such $x,$ we have  $ h\nu_h(x) = \nu(\pi(x)),$ where $\pi(x)$  design the projection of $x$ on the boundary $\partial \Omega,$  and  $ \nu(y) $   represents the   outward unitary normal to  the boundary $\partial \Omega$ at $y.$  

\medskip  
 We denote by $\signp$   the maximal monotone graph given by  
$$\signp(r)= \left\{ \begin{array}{ll}
	\displaystyle 1 &\hbox{ for  }r>0\\  
	\displaystyle [0,1]\quad & \hbox{ for } r=0\\
	\displaystyle 0 &\hbox{ for  }r<0. \end{array}\right. $$ 
Moreover, we define $\sign_0$ and  $\sign^{\pm}_0,$ the   discontinuous   applications defined from $\RR$ to $\RR$ by  
  $$\sign_0(r)= \left\{ \begin{array}{ll}
  	\displaystyle 1 &\hbox{ for  }r>0\\  
  	\displaystyle 0\quad & \hbox{ for } r=0\\
  	\displaystyle -1 &\hbox{ for  }r<0,  \end{array}\right.,\quad \spo(r)= \left\{ \begin{array}{ll}
  	\displaystyle 1 &\hbox{ for  }r>0\\   
  	\displaystyle 0 &\hbox{ for  }r\leq 0 \end{array}\right.    \quad   \smo(r)= \left\{ \begin{array}{ll}
  	\displaystyle 0 &\hbox{ for  }r\geq 0\\   
  	\displaystyle 1 &\hbox{ for  }r< 0 \end{array}\right.   $$

 \bigskip 
Throughout the paper, we  assume that $u_0$ and the velocity vector filed $V$ satisfy  the following assumptions :  
 \begin{itemize}
 	\item  $\displaystyle  u _0\in L^\infty(\Omega)$  and $0\leq \vert u_0\vert \leq 1$ a.e. in $\Omega.$
 	\item $\displaystyle V \in  W^{1,2}(\Omega)^N$, $\nabla \cdot V\in L^\infty(\Omega) $  and satisfies (outward pointing  velocity vector field condition on the boundary)
 	\begin{equation} \label{HypV0}
 		V\cdot \nu \geq 0\quad \hbox{ on }\Gamma_D  \quad \hbox{ and } 	\quad V\cdot \nu = 0\quad \hbox{ on }\Gamma_N .
 		\end{equation}
 \end{itemize}  
%Then, we denote by $H^1_D(\Omega)$ the Sobolev space
%$$H^1_D(\Omega) =\Big\{ u\in H^1(\Omega)\: :\  u=0, \: \:  \mathcal L^{N-1}\hbox{-a.e. in }\Gamma_D \Big\} . $$

 %remark 
 \begin{remark}
 \begin{enumerate}
 	\item 
 Since $\displaystyle V \in  W^{1,2}(\Omega)^N$ and  $\nabla \cdot V\in L^\infty(\Omega) ,$   then $V\cdot \nu \in H^{-\frac{1}{2}}(\partial\Omega).$  So, a priori  \eqref{HypV0}  needs to be understood in a weak sense ; i.e. 
 \begin{equation}\label{HypVGammaN0}
 	\int_\Omega V\cdot \nabla \xi \: dx +\int_\Omega \nabla \cdot V\: \xi\: dx \geq 0,\quad \hbox{ for any }0\leq \xi\in H^{1}(\Omega)
 \end{equation}
and 
 \begin{equation}\label{HypVGammaN}
	\int_\Omega V\cdot \nabla \xi \: dx +\int_\Omega \nabla \cdot V\: \xi\: dx = 0,\quad \hbox{ for any }0\leq \xi\in H^1_D(\Omega).
\end{equation}
%One can see also the papers  \cite{ChenFrid,ChenTZ} for more details and expressions in this direction.  
  In particular,  this condition  implies that  
	\begin{equation}\label{HypV}
	\liminf_{h\to 0}  \frac{1}{h} \int_{D_h}  \xi\:   V(x)   \cdot \nu(\pi(x))  \: dx \geq   0 , \quad \hbox{ for any }0\leq \xi \in H^{1}(\Omega),
\end{equation}
 	where, for any $0<h\leq  h_0,$  $D_h$ is a  neighbourhood  of $\partial \Omega.$ Nevertheless,  in order to prove uniqueness we'll assume that \eqref{HypV} is fulfilled for any $0\leq \xi\in L^\infty(\Omega)$ (see Lemma \ref{pintegral}). We do not know if this is a consequence of  the assumption \eqref{HypV0} even if it remains be to true for a large class of  practical situations (see Remark \ref{RemboundaryCond1} and Remark  \ref{RemboundaryCond2}).

	\item   	 In some models of congested  crowd motion, the vector field velocity may be  given by 
	\begin{equation}\label{HypVexple}
		V=-\nabla d(.,\Gamma_D) ,\quad \hbox{ in }\Omega.
	\end{equation}
	This means that the pedestrian choose the euclidean geodesic trajectory to the escape door  to get away from the environment  $\Omega.$  See that in this case the velocity vector field is outward pointing  on $\Gamma_D$ as we need,  but strictly inward pointing  on $\Gamma_N.$  To fall into the scope of \eqref{HypV0} one needs for instance to consider  \eqref{HypVexple} outside a small neighbourhood of $\Gamma_N$ and set $V$ to be tangential in this neighbor. This example constitute a  typical practical  situation (among many others)  for which all the results of this paper may be applied.  By the way, one   sees that in this case  \eqref{HypV} is fulfilled for any $0\leq \xi\in L^\infty(\Omega)$  (thanks to   \eqref{nuh})
\end{enumerate}

\end{remark}

\bigskip 
To begin with, we consider first the problem
\begin{equation}
	\label{cmef}
	\left\{  \begin{array}{ll}\left.
		\begin{array}{l}
			\displaystyle \frac{\partial u }{\partial t}  -\Delta p +\nabla \cdot (u  \: V)= f \\
			\displaystyle u \in \sign(p)\end{array}\right\}
		\quad  & \hbox{ in } Q \\  \\
		\displaystyle p= 0  & \hbox{ on }\Sigma_D:= (0,T)\times \Gamma_D\\  \\
		\displaystyle (\nabla p- u  \: V)\cdot \nu = 0  & \hbox{ on }\Sigma_N:= (0,T)\times \Gamma_N\\  \\
		\displaystyle  u (0)=u _0 &\hbox{ in }\Omega,\end{array} \right.
\end{equation}
where $f\in L^1(Q)$ is given.

%Definition
\begin{definition}[Notion of solution] \label{defws} A  couple $(u ,p) $  is said to be a weak solution of   \eqref{cmef}
	if $(u ,p)\in  L^\infty(Q) \times  L^2
	\left(0,T;H^1_D(\Omega)\right)$, $\displaystyle    u \in \sign (p)$ a.e. in $Q$  and
	\begin{equation}
		\label{evolwf}
		\displaystyle \frac{d}{dt}\int_\Omega u \:\xi+\int_\Omega ( \nabla p -  u \:V) \cdot  \nabla\xi   =     \int_\Omega f\: \xi  , \quad \hbox{ in }{\D}'(0,T),\quad \forall \: 	\xi\in H^1_D(\Omega).
	\end{equation}
	We'll say plainly  that $u$ is a solution of \eqref{cmef} if $u\in \C([0,T),L^1(\Omega))$, $u(0)=u_0$ and there exists $p\in L^2
	\left(0,T;H^1_D(\Omega)\right)$ such that the couple $(u,p)$ is a weak solution of   \eqref{cmef}.
\end{definition}

 \begin{remark} \label{RcondV}
 The assumptions we suppose on $V$ look alike to be stronger regard  to the literature on (linear) continuity equation. We think it is possible to extend our  results for weak solution to more general $V.$  But,  in  the presence of the second order term, the problem  seems to be  heavy and much more technical. Let us give here at least a  short reminder on the  uniqueness of weak solution for  continuity equation, which could comprise some possible extensions (open questions !) for    the Hele-Shaw flow  with a  linear Drift. 
 
 	\begin{enumerate}
 
\item  The analysis  of the continuity equation  (without  a second order term)  in the case when $V$  has low regularity has   drawn considerable attention. For an overview of some of the main contributions, we refer to the lecture notes by Ambrosio and Crippa \cite{AC} (see also \cite{CDS} for more references on this subject). 
% for   bounded and $V$  enjoys Sobolev and BV (bounded total variation) regularity, respectively.  
Counter-example for the uniqueness when $V$ is not enough regular (up to the boundary)  may be found in \cite{CDS}. Indeed, regardless the orientation of $V$  at the boundary, uniqueness may be violated as soon as $V$   enjoys BV regularity in every open set $\omega \subset\!\subset \Omega$, but not at the boundary of $\Omega$ (cf.   \cite{CDS} for more details and discussions on this subject). 
 
 \item   The outward pointing condition on $\Gamma_D$ seems to be an optimal  condition for uniqueness when one deals with purely Dirichlet boundary condition. Indeed,   one sees easily that the $1-$dimensional example  in $(0,2)$ : 
 \begin{equation} \label{explen1}
 	\left\{  \begin{array}{ll}\left.
 		\begin{array}{l}
 			\displaystyle \partial_t u -\partial_{xx} p +\partial_x  u = 0 \\
 			\displaystyle u \in \sign(p)\end{array}\right\}
 		\quad  & \hbox{ in } (0,\infty)\times (0,2) \\  \\
 		\displaystyle p(0)=p(2)=0  &    \hbox{ in } (0,\infty) \\\  \\ 
 		\displaystyle  u (0)\equiv 1 &\hbox{ in }(0,2),\end{array} \right.
 \end{equation}
 where $V\equiv 1$ is inward pointing on $0$ and outward pointing on $2,$ has many solutions.  Indeed, one can take any smooth function $F\: :\:  \RR\to \RR^+$ such that $F(r)=1$ for any $r\leq 0$ and $F(r)=0$ for any $r\geq 1,$ the function  $u(t,x)=F(t-x)$ for any $(t,x)\in [0,\infty)\times [0,2]$ is a solution of \eqref{explen1}.  Here $p\equiv 0$ in  $[0,\infty)\times [0,2],$ and the structure of Dirichlet boundary condition in \eqref{cmef}  leaves $u$ free  on the boundary. To handle/avoid  this kind of situation one needs to change the selection criteria on the boundary. 
 
\item When  the vector filed is outward pointing,  heuristically  the solution would not be worst at/near the boundary. 
%not affected by the loss of regularity of $V$  at the domain boundary. 
Indeed, at least in the smooth case the solution may be simply “carried out” of the domain along the characteristics and, consequently, the behavior of the solution inside the domain is not substantially affected by what happens close to the boundary.  However, let us remind the   reader that examples are given in  \cite{CDS} discussing that even if $V$  is outward pointing at $\partial \Omega ,$ then uniqueness may be violated as soon as the regularity ($BV$ regularity) deteriorates at the boundary.

% \item  In general the case where    vector field $V$  is strictly inward pointing at the boundary,  heuristically  the solutions would   be worst at/near the boundary.  Indeed, the  solution “enters” the domain $\Omega,$  although the characteristics are inward pointing at the boundary.   This could explain at least  partially the restriction  of our  techniques in this paper, and prospective   gain of technicality  of the  case of  strictly inward pointing vector field $V$ at the boundary. 

	\end{enumerate}
 \end{remark}

%%%%%%%%%%%%%%%%%%%%%%%%%%%%%%%%
\section{$L^1-$Comparison principle}
\setcounter{equation}{0}
In this section, we focus first on the  uniqueness and $L^1-$Comparison principle of weak solution. Our first main result is the following. 
 \begin{theorem} \label{compcmef}
Under the assumptions of Section \ref{sectionprem}, we assume moreover that 
	\begin{equation}\label{HypVstg}
	\liminf_{h\to 0}  \frac{1}{h}  \int_{\Omega\setminus \Omega_h}  \xi\:   V(x)   \cdot \nu(\pi(x))  \: dx \geq   0 , \quad \hbox{ for any }0\leq \xi \in L^2(\Omega).
\end{equation}
If $(u_1,p_1)$ and $(u_2,p_2)$ are two weak solutions of \eqref{cmef} associated with $f_1,\ f_2\: \in L^1(Q)$ respectively, then there exists $\kappa \in L^\infty(Q),$ such that $\kappa\in \signp(u_1-u_2)$ a.e. in $Q$ and 
	\begin{equation}
		\label{evolineqcomp}
	\frac{d}{dt}	\int_\Omega ( u _1-u _2)^+ \: dx \leq \int_\Omega \kappa\:     ( f_1-f_2)\: dx ,\quad \hbox{ in }\D'(0,T).
	\end{equation}
	In particular, we have
	$$ 	\frac{d}{dt} \Vert u_1-u_2\Vert_{1} \leq \Vert f_1-f_2\Vert _{1},\quad \hbox{ in }\D'(0,T).$$
	Moreover, if $f_1\leq f_2,$  a.e. in  $Q,$ and $u_1,u_2$ are two corresponding solutions  satisfying $u_1(0)\leq u_2(0) $ a.e. in $\Omega,$   then
	$$u_1\leq u_2,\quad \hbox{ a.e. in  }Q.$$
	\end{theorem}

\begin{remark}\label{RemboundaryCond1}
 
 The assumption \eqref{HypVstg}	 is technical for the the proof of Theorem \ref{compcmef}. It is fulfilled for a large  class of vector field $V$, like for instance the case  $V$ is  outward pointing in the neighborhood of  the boundary. We postpone the technicality of this assumption in  Remark \ref{RemboundaryCond1} after the proof of Theorem \ref{compcmef}. 
 
\end{remark}

As an immediate  and primary consequence of Theorem \ref{compcmef}, we have the following results. Further consequences, will be given in the following sections.   
%Corollary
\begin{corollary}\label{Cuniq}
Let  $u_0\in L^\infty(\Omega)$ and $f\in L^1(Q).$  

\begin{enumerate}

\item There exists at most one  $u\in L^\infty(Q)$ such that, there exists $p\in L^2
\left(0,T;H^1_D(\Omega)\right)$, $\displaystyle    u \in \sign (p)$ a.e. in $Q$  and the couple $(u ,p)$ satisfies 
\begin{equation}
	\displaystyle   - \int_Qu\: \xi  \:\partial_t \psi \: dxdt +\int_Q ( \nabla p -  u \:V) \cdot  \nabla\xi \: \psi\: dxdt   =     \int_Q f\: \xi \psi\: dxdt + \int_\Omega u_0\: \psi(0)\:\xi \: dx  , 
\end{equation} 
for any $	\psi\in \D([0,T))\hbox{ and }\xi\in H^1_D(\Omega).$

\item 	The problem \eqref{cmef} has at most one solution, in the sense that $u\in \C([0,T),L^1(\Omega))$, $u(0)=u_0$ and there exists $p\in L^2
\left(0,T;H^1_D(\Omega)\right)$ such that the couple $(u,p)$ satisfies \eqref{cmef}.

\item If $u_0\geq 0$ and $f\geq 0,$ then any   solution is nonegative. 
	\end{enumerate}

\end{corollary}

 \begin{remark}
 	The main difference between part $1$ and part $2$ in Corollary \ref{Cuniq} concerns the way one handle  initial data $u_0.$ While the  first assertion handle it in a weak sense, the second one is doing the job in a strong sense (since $u$ is assumed there  to be in $\C([0,T),L^1(\Omega))$). Of course the second notion implies  the first one. In this paper, we favor the  notion of solution in $\C([0,T),L^1(\Omega)),$  since this is automatically  obtained through nonlinear semigroup theory. After all,   one  sees that both solutions will exist and are  equivalent. 
 	   
 \end{remark}

The main tool to prove this result is doubling and de-doubling variables. Since the degeneracy of the problem,   we prove first that   a weak solution satisfies some kind of (new) renormalized formulation, and then  carry out  trickily  doubling and de-doubling variables device to prove \eqref{evolineqcomp}.

  Recall that,   the merely transport equation case corresponds to the situation where  $p\equiv 0.$ In this case,    the renormalized formulation reads
	\begin{equation}  	\partial_t \beta (u)  + 	V\cdot \nabla \beta(u)  +u\:  \nabla \cdot V   	\beta'(u)
		=   f \:   \beta'(u)
		\quad \hbox{ in }  \D'(Q)  ,	\end{equation}
for any $\beta\in \mathcal C^1(\RR)$, where from now on, for any $z\in L^1(\Omega)$  the notation $V\cdot \nabla z$ needs to be understood in the sense of distributions   as follows \begin{equation}\label{formdiv}
 V\cdot \nabla z = \nabla  \cdot  (z\: V)- z\: \nabla \cdot V\:  .
\end{equation}

%\begin{remark}
% 
%\end{remark}

%Proposition
\begin{proposition} \label{prenormal}
Under the assumptions of Section \ref{sectionprem}, if $(u,p)$ is a weak solution of \eqref{cmef},  then
 	\begin{equation} \label{renormal+}	\begin{array}{c}
		\partial_t \beta (u)  - \Delta p^+  + 	V\cdot \nabla \beta(u)  +u\:  \nabla \cdot V   ( 	\beta'(u) 	\: \chi_{[p=0]}   +    \spo(p)  )      \\  \\
		\leq   f \:  (\beta'(u) \:  \chi_{[p=0]}  + \spo(p) )
	  \quad \hbox{ in }  \D'(Q)
\end{array}  	\end{equation}
and
 \begin{equation} \label{renormal-}
 	\begin{array}{c}
		\partial_t \beta (u)  - \Delta p^-   + 	V\cdot \nabla \beta(u)  +  u\: \nabla \cdot V   ( 	\beta'(u)\:
	\:  \chi_{[p=0]}  -    \smo(p) )       \\  \\
		\leq   f \:  ( \beta'(u)\:   \chi_{[p=0]}  -  \smo(p)  )
		\quad \hbox{ in }  \D'(Q),
	\end{array}
	\end{equation}
	for any $\beta\in \mathcal C^1(\RR)$ such that $0\leq \beta'\leq 1.$
\end{proposition}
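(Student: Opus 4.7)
The plan is to combine a DiPerna--Lions type renormalization in space-time with a carefully designed smooth multiplier, and to extract the inequality direction of \eqref{renormal+} from a sign-gain on the diffusion. I focus on \eqref{renormal+}; \eqref{renormal-} follows from the same argument under the change $(u,p,f)\mapsto(-u,-p,-f)$. First, mollify: with $\rho_\eps$ a standard space-time mollifier, set $u_\eps:=\rho_\eps*u$, $p_\eps:=\rho_\eps*p$, $f_\eps:=\rho_\eps*f$, so that
$$\partial_t u_\eps-\Delta p_\eps+\nabla\cdot(u_\eps V)=f_\eps+r_\eps,$$
where the commutator $r_\eps:=\nabla\cdot(u_\eps V)-\rho_\eps*\nabla\cdot(uV)$ satisfies $r_\eps\to 0$ in $L^1_{\mathrm{loc}}(Q)$ by the DiPerna--Lions commutator lemma (using $V\in W^{1,1}_{\mathrm{loc}}$, $\nabla\cdot V\in L^\infty$ and $u\in L^\infty$). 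By density one may assume $\beta\in C^2$ with $0\leq\beta'\leq 1$, the general $C^1$ case being recovered at the end.

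Fix a smooth nondecreasing $h_\delta:\RR\to[0,1]$ approximating $\spo$ with $h_\delta\equiv 0$ on $(-\infty,0]$, $h_\delta\equiv 1$ on $[\delta,\infty)$ and $h'_\delta(0)=0$; set $H_\delta(r):=\int_0^r h_\delta(s)\,ds\to r^+$. For $0\leq \xi\in\D(Q)$, multiply the regularized equation by
$$\Lambda_{\eps,\delta}\,\xi:=\bigl(h_\delta(p_\eps)+\beta'(u_\eps)[1-h_\delta(p_\eps)-h_\delta(-p_\eps)]\bigr)\,\xi,$$
the second bracket being a smooth bump at $0$ designed so that $\Lambda_{\eps,\delta}\to\spo(p)+\beta'(u)\chi_{[p=0]}$ pointwise. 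The chain rule on the smooth $u_\eps,p_\eps$ converts the time-derivative term into $\partial_t\beta(u_\eps)$ plus cross products of the form $u_\eps h'_\delta(p_\eps)\partial_t p_\eps$, which are reorganized by the structural identity $u\,h_\delta(p)=h_\delta(p)$ (immediate from $u\in\sign(p)$) into distributional $\partial_t$-derivatives of Lipschitz compositions of $p$; the transport $\nabla\cdot(u_\eps V)=V\cdot\nabla u_\eps+u_\eps\nabla\cdot V$ assembles, via $\beta'(u_\eps)V\cdot\nabla u_\eps=V\cdot\nabla\beta(u_\eps)$, into the expected $V\cdot\nabla\beta(u)$ plus the $u\nabla\cdot V$ contribution. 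Crucially, the diffusion from $h_\delta(p_\eps)$ yields, after integration by parts,
$$-\int_Q\Delta p_\eps\,h_\delta(p_\eps)\xi=\int_Q h'_\delta(p_\eps)|\nabla p_\eps|^2\xi+\int_Q\nabla H_\delta(p_\eps)\cdot\nabla\xi\;\geq\;\int_Q\nabla H_\delta(p_\eps)\cdot\nabla\xi$$
thanks to $h'_\delta\geq 0$; this sign-gain produces $\langle-\Delta p^+,\xi\rangle$ in the limit (since $\nabla H_\delta(p_\eps)\to\nabla p^+$ in $L^2$) and is the source of the inequality ``$\leq$'' in \eqref{renormal+}.

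Passing to limits in order $\eps\to 0$ then $\delta\to 0$ uses pointwise and $L^q_{\mathrm{loc}}$ convergence of $u_\eps,p_\eps$, weak-$L^2$ convergence of $\nabla p_\eps$, $r_\eps\to 0$ in $L^1_{\mathrm{loc}}$, and dominated convergence for $h_\delta(p)\to\spo(p)$, $H_\delta(p)\to p^+$, $1-h_\delta(p)-h_\delta(-p)\to\chi_{[p=0]}$. Throughout, $u\in\sign(p)$ collapses mixed terms: $u\,h_\delta(p)=h_\delta(p)$ everywhere, and $\beta(u)=\beta(\pm 1)$ is constant on $\{p\lessgtr 0\}$, so that $\partial_t\beta(u)$ and $\nabla\beta(u)$ concentrate on $\{p=0\}$, matching exactly the structure of the right-hand side of \eqref{renormal+}. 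The main obstacle will be the rigorous handling of the cross terms involving $\partial_t p_\eps$ and of the singular factor $\phi'_\delta\sim 1/\delta$ attached to the bump $\phi_\delta:=1-h_\delta(\cdot)-h_\delta(-\cdot)$: $\partial_t p$ has no a priori regularity, but $u\,h_\delta(p)=h_\delta(p)$ allows one to rewrite the offending piece as quantities of type $-\int h_\delta(p)\partial_t\xi$ controlled distributionally, while the $1/\delta$ singularity is tamed by $\nabla p=0$ a.e.\ on $\{p=0\}$ together with a coarea estimate on the thin strips $\{|p|<\delta\}$; the slack supplied by the sign-gain on the diffusion then absorbs any residual nonnegative remainder as an inequality.
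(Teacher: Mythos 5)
Your proposal assembles the right ingredients (the DiPerna--Lions commutator, the identity $u\,h_\delta(p)=h_\delta(p)$ coming from $u\in\sign(p)$, and the sign of $h_\delta'(p)\,|\nabla p|^2$), but the single-pass strategy of multiplying the mollified equation by the combined weight $\Lambda_{\eps,\delta}=h_\delta(p_\eps)+\beta'(u_\eps)\,[1-h_\delta(p_\eps)-h_\delta(-p_\eps)]$ breaks down at exactly the two points you flag at the end, and neither is repaired by the devices you invoke. First, the cross term $u_\eps\,h_\delta'(p_\eps)\,\partial_t p_\eps$: the identity $u\,h_\delta(p)=h_\delta(p)$ holds for $(u,p)$ but not for the mollified pair $(u_\eps,p_\eps)$, and $\partial_t p$ has no a priori regularity, so there is no way to ``reorganize'' this term into a distributional time derivative. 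The paper resolves precisely this difficulty \emph{before} any renormalization in $u$, by testing the weak formulation with the Steklov-type function $\Phi^h=\xi\,\psi(t)\,\frac{1}{h}\int_t^{t+h}\hepsp(p(s))\,ds$: the time term then becomes a difference quotient of $\hepsp(p)$ paired with $u(t)$, and the one-sided relations $u(t)\,\hepsp(p(t))=\hepsp(p(t))$ and $u(t)\,\hepsp(p(t+h))\leq\hepsp(p(t+h))$ (using only $|u|\leq 1$ and $\hepsp\geq 0$) yield the inequality \eqref{eqp+1} without ever differentiating $p$ in time.

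Second, and more seriously, your multiplier forces you to integrate $-\Delta p_\eps$ by parts against $\beta'(u_\eps)\,[1-h_\delta(p_\eps)-h_\delta(-p_\eps)]$. This produces (i) terms containing $\nabla u_\eps$, which is not uniformly bounded in any space since $u$ is merely $L^\infty$, and (ii) the term $\int |\nabla p_\eps|^2\,\beta'(u_\eps)\,\phi_\delta'(p_\eps)\,\xi$ with $\phi_\delta'=-h_\delta'(\cdot)+h_\delta'(-\cdot)$ of indefinite sign and magnitude $1/\delta$; the bound you would need, namely $\frac{1}{\delta}\int_{\{0<|p|<\delta\}}|\nabla p|^2\to 0$, does not follow from $\nabla p=0$ a.e.\ on $\{p=0\}$ and is not supplied by a coarea argument for a general $H^1$ function. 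The paper avoids both problems structurally: it first establishes two separate distributional inequalities with the \emph{same} flux $\nabla p^+$ (Lemma \ref{leqp+}, giving \eqref{eqp+1} and \eqref{eqp+2}), and then in Lemma \ref{lrenormal} multiplies the evolution inequality by $\beta'(u_\eps)$ and the elliptic one by $1-\beta'(u_\eps)$ and adds; since these weights are nonnegative and sum to one, the flux term recombines exactly into $-\nabla\cdot F_\eps$ and is never differentiated against $\beta'(u_\eps)$ or against any approximation of $\chi_{[p=0]}$. Without this two-inequality decomposition (or an equivalent device), your argument does not close.
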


\begin{remark}
 	The formulations \eqref{renormal+} and  \eqref{renormal-} describe  some kind of renormalized  formulation     for the solution  $u.$   For the  one phase Hele-Shaw problem this formulation reads simply as follows
	\begin{equation} \label{renormalHS+}	\begin{array}{c}
			\partial_t \beta (u)   - \Delta p  + 	V\cdot \nabla \beta(u)  +  u\:  	\beta'(u)\:  \nabla \cdot V \:
			\chi_{[p=0]}   +   \nabla \cdot V   \chi_{[p\neq 0]}    \\  \\
			\leq   f \:  (\beta'(u)  \:  \chi_{[p=0]}  + \chi_{[p\neq 0]})
			\quad \hbox{ in }  \D'(Q),
		\end{array}
	\end{equation}
	for any $\beta\in \mathcal C^1(\RR)$ such that $0\leq \beta'\leq 1.$ 
\end{remark}

We prove  this results in two steps,   we begin with the case where $\beta \equiv 0$. Then  
proceeding skillfully  with the positive and negative part of $p$, and using a smoothing  procedure with  a commutator   à la  DiPerna-Lions (cf. \cite{DiLions}, see also \cite{Ambrosio}), we  prove that any weak solution $u$  satisfies    renormalized   formulations \eqref{renormal+} and \eqref{renormal-}.

%Lemma
\begin{lemma}\label{leqp+}  	 
Under the assumptions of Section \ref{sectionprem},  if $(u,p) $ is a weak solution of \eqref{cmef}, then  the renomalized formulations \eqref{renormal+} and \eqref{renormal-} are fulfilled with  $\beta \equiv 0$ in  $\D'((0,T)\times \overline \Omega).$ That is 
		\begin{equation}\label{eqp+1}
		-\Delta p^+ +(\nabla \cdot V-f)\spo(p)  \leq 0 \quad \hbox{ in }\D'((0,T)\times \overline \Omega),
	\end{equation}
and 	\begin{equation}\label{eqp-1}
		-\Delta p^- +(\nabla \cdot V+f)\smo(p)\leq 0   \quad \hbox{ in }\D'((0,T)\times \overline \Omega).
	\end{equation}
	Moreover, we have 
	\begin{equation}\label{eqp+2}
		\partial_t u    - \Delta p^+  +\nabla \cdot(  u\:  V )       +
		\nabla \cdot  V\:   \smo(p)     \leq    f\:  (1-  \smo (p) )    \quad \hbox{ in }\D'(Q),
	\end{equation}
 \end{lemma}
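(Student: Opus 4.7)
The plan is to reduce the three claims to a single Kato-type inequality for $p$, driven by the pointwise identity $u\,T_\eps(p) = T_\eps(p)$. First observe that \eqref{eqp+2} is a purely algebraic consequence of \eqref{eqp-1} together with the weak formulation: the equation reads $\partial_t u - \Delta p^+ + \Delta p^- + \nabla\cdot(uV) = f$ in $\D'(Q)$, and substituting the bound $-\Delta p^- \leq -(\nabla\cdot V + f)\smo(p)$ from \eqref{eqp-1} on the right-hand side immediately yields \eqref{eqp+2}. Moreover, \eqref{eqp-1} follows from \eqref{eqp+1} by the symmetry $(p,u,f) \mapsto (-p,-u,-f)$, which preserves the structure of \eqref{cmef} because $\sign$ is odd. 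So only \eqref{eqp+1} must be proved.

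For \eqref{eqp+1} I would test the weak formulation \eqref{evolwf} against
\[ \xi = T_\eps(p_\delta)\,\eta, \]
where $\eta \in \D((0,T)\times\overline\Omega)$ is nonnegative; $T_\eps \in C^\infty(\RR;[0,1])$ is nondecreasing with $T_\eps \equiv 0$ on $(-\infty,0]$ and $T_\eps \equiv 1$ on $[\eps,\infty)$; and $p_\delta := \rho_\delta *_t p$ is a standard time-mollification of $p$ (extended by zero outside $(0,T)$). Admissibility: $p = 0$ on $\Gamma_D$ forces $T_\eps(p_\delta) = 0$ on $\Gamma_D$, so $\xi \in L^2(0,T;H^1_D)$; and the mollification gives $\partial_t\xi \in L^2(Q)$. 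The decisive pointwise identities are
\[ u\,T_\eps(p) = T_\eps(p) \qquad\text{and}\qquad u\,T_\eps'(p) = T_\eps'(p) \quad\text{a.e. in } Q, \]
since $\{T_\eps(p) > 0\}\cup\{T_\eps'(p) > 0\}\subseteq \{p > 0\}$ and the constraint $u \in \sign(p)$ forces $u = 1$ there.

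Substituting $\xi$ produces four terms. The drift $\int_Q uV\cdot\nabla\xi$ collapses to $-\int_Q T_\eps(p)\eta\,\nabla\cdot V$ after using the two identities above and integrating by parts spatially, with boundary contributions vanishing because $T_\eps(p) = 0$ on $\Gamma_D$ and $V\cdot\nu = 0$ on $\Gamma_N$. The diffusion $\int_Q \nabla p\cdot\nabla(T_\eps(p)\eta) = \int_Q T_\eps'(p)|\nabla p|^2\eta + \int_Q T_\eps(p)\nabla p\cdot\nabla\eta$ loses its first nonnegative piece (this is what yields the inequality rather than an equality), while the second piece converges to $\int_Q \nabla p^+\cdot\nabla\eta$ as $\eps\to 0$, since $T_\eps(p)\nabla p = \nabla S_\eps(p)$ with $S_\eps(r) := \int_0^r T_\eps \to r^+$ in $H^1$.

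The time term is the main obstacle: it splits as
\[ -\int_Q u\,T_\eps(p_\delta)\,\partial_t\eta \;-\; \int_Q u\,\partial_t T_\eps(p_\delta)\,\eta. \]
The first piece converges to $-\int_Q T_\eps(p)\partial_t\eta$ via $u\,T_\eps(p_\delta) \to u\,T_\eps(p) = T_\eps(p)$ in $L^2(Q)$. For the second, I would invoke a DiPerna--Lions type commutator argument: writing $u\,\partial_t T_\eps(p_\delta) = \partial_t T_\eps(p_\delta) + (u-1)\,\partial_t T_\eps(p_\delta)$, one shows the commutator $\int_Q (u-1)\,\partial_t T_\eps(p_\delta)\,\eta \to 0$ as $\delta\to 0$ (heuristically, $\partial_t T_\eps(p_\delta)$ concentrates on $\{0<p<\eps\}\subset\{p>0\}$ in the limit, where $u = 1$). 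Integration by parts in $t$ then identifies the limit of the remaining part as $\int_Q T_\eps(p)\,\partial_t\eta$, so the two time contributions cancel exactly. Sending finally $\eps\to 0$ (with $T_\eps(p) \to \spo(p)$ in every $L^q$, $q<\infty$) yields
\[ \int_Q \nabla p^+\cdot\nabla\eta + \int_Q (\nabla\cdot V - f)\,\spo(p)\,\eta \leq 0 \]
for every nonnegative $\eta \in \D((0,T)\times\overline\Omega)$, which is exactly \eqref{eqp+1}.
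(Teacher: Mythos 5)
Your global architecture coincides with the paper's: only \eqref{eqp+1} needs a direct proof, \eqref{eqp-1} follows by the symmetry $(u,p,f)\mapsto(-u,-p,-f)$ (which turns $p^+$ into $p^-$ and $\spo(p)$ into $\smo(p)$), and \eqref{eqp+2} is obtained by adding \eqref{eqp-1} to the equation $\partial_t u-\Delta p+\nabla\cdot(uV)=f$ in $\D'(Q)$. Your treatment of the diffusion and drift terms — dropping the nonnegative piece $\int T_\eps'(p)\,|\nabla p|^2\,\eta$, using $u\,T_\eps(p)=T_\eps(p)$, and invoking \eqref{HypVGammaN} to kill the boundary contribution — is also exactly what the paper does.

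The gap is in the time term. You replace the exact cancellation you need by the claim that the commutator $\int_Q(u-1)\,\partial_t T_\eps(p_\delta)\,\eta\to 0$ as $\delta\to 0$, justified only by the heuristic that $\partial_t T_\eps(p_\delta)$ ``concentrates on $\{0<p<\eps\}$ where $u=1$.'' This is not a DiPerna--Lions commutator: that lemma controls $V\cdot\nabla u_\delta-(V\cdot\nabla u)_\delta$ for a Sobolev field $V$, whereas here the factor $u-1$ is merely bounded, vanishes only where $p(t,\cdot)>0$ at the \emph{same} time $t$, and is multiplied by $T_\eps'(p_\delta)\,\partial_t p_\delta$ whose support is governed by $p_\delta$, i.e.\ by the values of $p$ at \emph{nearby} times, and whose size is of order $\delta^{-1}$ (since $p$ is only $L^2$ in $t$). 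Precisely on the temporal boundary of $\{p>0\}$ — which is where all the mass of $\partial_t T_\eps(p_\delta)$ sits — one has $p_\delta\in(0,\eps)$ while $u$ may be strictly less than $1$, so the product is $O(\delta^{-1})$ on a set of measure $O(\delta)$ and there is no reason for it to vanish in the limit; no sign is available either, since $\partial_t p_\delta$ is signless. The paper avoids this entirely by using a \emph{one-sided} Steklov average $\Phi^h=\xi\,\psi\,\frac1h\int_t^{t+h}\hepsp(p(s))\,ds$: the time term becomes a forward difference quotient of $\hepsp(p)$ multiplied by $u(t)$, and the two elementary facts $u(t)\,\hepsp(p(t))=\hepsp(p(t))$ (exact, same time) and $u(t)\,\hepsp(p(t+h))\leq\hepsp(p(t+h))$ (from $u\leq 1$, $\hepsp\geq 0$) give $\limsup_{h\to0}\int\!\!\int_Q u\,\partial_t\Phi^h\leq 0$ after shifting the difference quotient onto the smooth $\psi$ — a one-sided bound with exactly the sign needed for \eqref{eqp+1}, obtained without any commutator. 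To repair your argument you should replace the symmetric mollification $p_\delta=\rho_\delta*_t p$ by this forward average and settle for the inequality rather than the exact cancellation.
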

	\begin{proof}  By density, it is enough to prove  \eqref{eqp+1}, \eqref{eqp-1} and \eqref{eqp+2} with any test function of the type $\psi\xi,$ with non-negative $\psi\in \D(0,T)$ and $\xi \in H^1(\Omega).$   To this aim, 
  we extend $p$ onto $\displaystyle \RR\times\Omega$ by
		$0$ for any   $\displaystyle t\not\in (0,T),$ and for any $h>0,$  we  consider
		$$\displaystyle \Phi^h (t,x)=    \xi(x)\:  \psi(t)\:  \frac{1}{h}\int_t^{t+h}  \hepsp (p(s,x))\:  ds  ,\quad \hbox{ for a.e. }(t,x)\in Q,$$
		where  $\psi$ is extended  in turn  onto $\displaystyle \RR$ by
		$0,$   and $\hepsp$ is given by
\begin{equation} \label{hepsp}
	\hepsp(r)=  \min \left( \frac{r^+}{\eps}, 1 \right),\quad \hbox{ for any }r\in \RR,
\end{equation}
for arbitrary  $\eps >0.$  It is clear that  $\Phi_h \in W^{1,2} \Big( 0,T;H^1_D(\Omega) \Big)\cap L^\infty(Q)$    is an admissible test function for the weak formulation, so that 
		\begin{equation}\label{evolh0}
			-	\int\!\!\!\int _Q   u  \: \partial_t   \Phi^h 	 +	\int\!\!\!\int _Q (\nabla  p - V\:  u   ) \cdot \nabla \Phi^h \\  \\ =  	\int\!\!\!\int _Q f\:  \Phi^h.
		\end{equation}
See that 
		\begin{equation} \label{rel0} \int\!\!\!\int _Q   u  \: \partial_t   \Phi^h   = 	\int\!\!\!\int _Q   u \: \partial_t   \psi   \:  \frac{1}{h}\int_t^{t+h}   \hepsp (p((s))\:  ds +	\int\!\!\!\int _Q
			u(t)\:	 \frac{  \hepsp(p (t+h))- \hepsp(p (t))}{h} \: \psi(t)\:  \xi.
		\end{equation}
	Moreover,  using the fact that for a.e. $t\in (0,T),$ $-1\leq u(t)\leq 1$,  ${\hepsp}\geq 0$ and $ \hepsp(0)=0,$  we have
	$u(t,x)\: \hepsp(p(t,x))=  \hepsp(p(t,x))  $ and 	$  u(t,x)\: \hepsp(p(t+h,x)) \leq     \hepsp(p(t+h,x))  $  a.e.   $(t,x)\in Q.$  So, for any $h>0$ (small enough), we have 
	 	\begin{eqnarray}
			\int\!\!\!\int _Q
			u(t)\:	 \frac{  \hepsp(p (t+h))- \hepsp(p (t))}{h} \: \psi(t)\:  \xi
			&\leq &  	\int\!\!\!\int _Q
			\frac{    \hepsp(p (t+h))   -    \hepsp(p (t))  }{h}\: \psi(t)   \: \xi  \\
			&\leq &  	\int\!\!\!\int _Q
			\frac{   \psi (t-h) -   \psi (t) }{h}\:    \hepsp(p (t))     \: \xi  .
		\end{eqnarray}
		This implies that
		\begin{equation}
			\limsup_{h\to 0 }	\int\!\!\!\int _Q
			u(t)\:	 \frac{   \hepsp(p (t+h))-  \hepsp(p (t))}{h} \: \psi(t)\:  \xi  \leq   -	\int\!\!\!\int _Q  \partial_t   \psi \:     \hepsp(p (t))   \: \xi   ,
		\end{equation}
so that, by  letting $h\to 0$ in \eqref{rel0}, we get
$$	\lim_{h\to 0 }   \int\!\!\!\int _Q   u  \: \partial_t   \Phi^h   
\leq  0.$$ 
Then, by  letting $h\to 0$ in  \eqref{evolh0}  
 	\begin{equation}\label{ajout0}
 \int\!\!\!\int _Q (\nabla  p - V\:  u   ) \cdot \nabla ( \hepsp(p (t))\:\xi)    - 	\int\!\!\!\int _Q f\:  \hepsp(p (t))\: \xi  \leq \int\!\!\!\int _Q f\:  \hepsp(p (t))\: \xi. 	\end{equation}
On the other hand,   using again the fact  that  $u  \hepsp(p)=  \hepsp(p) $, a.e. in $Q,$    we have
		\begin{equation}
		\begin{array}{ll}
				\int\!\!\!\int _Q
			( \nabla  p-u \: V ) \cdot \nabla
			\Big(  \hepsp(p)\: \xi \Big)   \:  \psi 
			  & = 			  	\int\!\!\!\int _Q     \hepsp(p)   \:  \nabla p\cdot  \nabla  \xi  \:  \psi +  \int\!\!\!\int _Q   \vert \nabla p\vert^2\:( {\hepsp})'(p)\: \xi  \:  \psi   \\          
			& \hspace*{2cm}- \int\!\!\!\int _Q   V\cdot \nabla  (\xi   \hepsp(p)   )  \:  \psi   \\  
			  & \geq  
			\int\!\!\!\int _Q     \hepsp(p)   \:  \nabla p\cdot  \nabla  \xi  \:  \psi    + \int\!\!\!\int _Q \nabla \cdot V  \:    (\xi   \hepsp(p)   )  \:  \psi    , \end{array} 
		  \end{equation}
	  where we use \eqref{HypVGammaN} and the fact that   $\vert \nabla p\vert^2 \: ({\hepsp})'(p)  \geq 0.$ 
Thanks to \eqref{ajout0},  this implies that 
\begin{equation} 
	\int\!\!\!\int _Q   \nabla p\cdot \nabla \xi \: {\hepsp}(p)\: \xi  \:  \psi +  \int\!\!\!\int _Q  \nabla \cdot  V \:  \xi   \hepsp(p)    \:  \psi 
	\leq \int\!\!\!\int _Q f\:  \hepsp(p (t))\: \xi. 
\end{equation}   
Letting now	$\eps\to 0$, we get   	  \eqref{eqp+1}.    As to  \eqref{eqp-1}, it follows by  using the  that   $(-u,-p)$ is also a solution 
	 of  \eqref{cmef} with $f$ replaced by $-f,$  and   applying    \eqref{eqp+1} to $(-u,-p)$.      At last, recall that
\begin{equation} \label{veq0}
	\partial_t u-\Delta  p   +\nabla \cdot(  u\:  V )    =  f\: \quad \hbox{ in }  \D'(Q)   .
\end{equation}
So, summing   \eqref{eqp-1} restrained to $\D'(Q)$ and \eqref{veq0}, we get    \eqref{eqp+2}.
 \end{proof}

 \bigskip 
Now, in order to prove the proposition by using \eqref{eqp+1} and  \eqref{eqp+2}, we prove the following technical lemma.  
%Lemma
\begin{lemma} \label{lrenormal}
Let $u\in L^1_{loc}(Q),$  $F\in L^1_{loc}(Q)^N$ and $J_1 \in L^1_{loc}(Q)$ be such that
	\begin{equation}\label{form1}
		\partial_t u +	V\cdot \nabla u -\nabla \cdot F \leq J_{1} \quad \hbox{ in } \mathcal D'(Q)
	\end{equation}
	where $V\cdot \nabla u$ is taken in the sense $V\cdot \nabla u = \nabla \cdot (u \:V) - u \: \nabla \cdot V, $   in $\D'(Q).$ If
	\begin{equation}\label{form2}
		-\nabla \cdot F \leq J_{2} \quad \hbox{ in } \mathcal D'(Q),
	\end{equation}
for some $J_2\in L^1_{loc}(Q),$ 	then
	\begin{equation} \label{techineq}
		\partial_t \beta (u)+ 	V\cdot \nabla \beta(u) -\nabla \cdot F \leq {J_{1}}\beta'(u) +  {J_{2}}(1-\beta'(u))         \quad \hbox{ in }  \D'(\Omega),
	\end{equation}
	for any $\beta\in \mathcal C^1(\RR)$ such that $0\leq \beta'\leq 1.$
 \end{lemma}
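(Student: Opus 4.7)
The plan is to reduce the statement to a formal chain-rule computation after regularizing $u$ in the space variable by a DiPerna--Lions mollification. At the formal (smooth) level the argument is transparent: multiplying \eqref{form1} by $\beta'(u) \in [0,1]$ and \eqref{form2} by $1 - \beta'(u) \geq 0$ and summing gives exactly \eqref{techineq}, provided one may apply the chain rule $\beta'(u)\,\partial_t u = \partial_t \beta(u)$ and $\beta'(u)\, V\cdot \nabla u = V\cdot \nabla \beta(u)$. The whole difficulty is to justify these identities, since $u$ is merely integrable and carries only distributional derivatives.

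To fix this, I would convolve in the $x$-variable only with a standard nonnegative mollifier $\rho_\eps \in \D(\RR^N)$. Setting $u_\eps := u \ast_x \rho_\eps$, $F_\eps := F \ast_x \rho_\eps$, and $(J_i)_\eps := J_i \ast_x \rho_\eps$, the hypotheses \eqref{form1} and \eqref{form2}, read with the convention \eqref{formdiv}, become
\[
	\partial_t u_\eps + V\cdot \nabla u_\eps - \nabla\cdot F_\eps \leq (J_1)_\eps + r_\eps,
	\qquad
	-\nabla\cdot F_\eps \leq (J_2)_\eps,
\]
where the commutator
\[
	r_\eps := V\cdot \nabla u_\eps - \bigl[\nabla\cdot(uV) - u\,\nabla\cdot V \bigr]\ast_x \rho_\eps
\]
tends to $0$ in $L^1_{loc}(Q)$ by the classical DiPerna--Lions commutator estimate, which applies under $V\in W^{1,1}_{loc}(\Omega)^N$, $\nabla\cdot V\in L^\infty(\Omega)$, together with the essential boundedness of $u$ available in our setting. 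Since $u_\eps$ is smooth in $x$, I may now multiply the first regularized inequality pointwise by $\beta'(u_\eps)\geq 0$ and invoke the classical chain rule, and multiply the second by $1-\beta'(u_\eps)\geq 0$; adding yields
\[
	\partial_t \beta(u_\eps) + V\cdot \nabla \beta(u_\eps) - \nabla\cdot F_\eps \leq (J_1)_\eps \beta'(u_\eps) + (J_2)_\eps(1-\beta'(u_\eps)) + r_\eps\, \beta'(u_\eps).
\]

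Passing to the limit $\eps \to 0$ in $\D'(Q)$ is then routine: $u_\eps \to u$ a.e.\ and in $L^1_{loc}$, so $\beta(u_\eps) \to \beta(u)$ in $L^1_{loc}$ because $\beta$ is $1$-Lipschitz, while $\beta'(u_\eps)\to \beta'(u)$ a.e.\ with $0\leq \beta'(u_\eps)\leq 1$; also $F_\eps \to F$ and $(J_i)_\eps \to J_i$ in $L^1_{loc}$. Dominated convergence disposes of the nonlinear right-hand side, and $r_\eps\,\beta'(u_\eps)\to 0$ in $L^1_{loc}$ since $r_\eps\to 0$ and $\beta'$ is bounded; all other terms pass in the distributional sense. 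The main obstacle is precisely the commutator estimate $\|r_\eps\|_{L^1_{loc}}\to 0$ under the low regularity $V\in W^{1,1}_{loc}$: this is the heart of the DiPerna--Lions technique and relies on the classical representation of $r_\eps$ through a first-order expansion of $V$ integrated against a kernel with vanishing first moments. Everything else is standard once this is in place.
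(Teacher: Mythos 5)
Your proposal follows essentially the same route as the paper: mollify, absorb the error in a DiPerna--Lions commutator that vanishes in $L^1_{loc}$, take the convex combination $\beta'(u_\eps)\cdot(\ref{form1})_\eps+(1-\beta'(u_\eps))\cdot(\ref{form2})_\eps$, apply the chain rule, and pass to the limit. The one substantive difference is that you convolve in $x$ only, whereas the paper convolves in $(t,x)$ with a mollifier on $\RR\times\RR^N$ over the shrunken set $Q_\eps$, and this difference matters at exactly one point: after a space-only mollification, $\partial_t u_\eps$ is still only a distribution (a priori an $L^1$ function minus a nonnegative measure, since \eqref{form1} is an inequality), so the pointwise multiplication by $\beta'(u_\eps)$ and the identity $\beta'(u_\eps)\,\partial_t u_\eps=\partial_t\beta(u_\eps)$ that you call ``classical'' are not justified as stated; one would need a Vol'pert-type chain rule in the time variable, or an additional argument that the singular part has a sign compatible with $\beta'\geq 0$. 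Mollifying in time as well, as the paper does, makes $u_\eps$ smooth in both variables on $Q_\eps$ and renders every step classical, so the fix is immediate and the rest of your argument (commutator convergence under $V\in W^{1,1}_{loc}$, $\nabla\cdot V\in L^\infty$, and the dominated-convergence passage to the limit) matches the paper's proof.
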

\begin{proof}    We set $Q_\eps   := \left\{ (t,x)\in Q\: :\:  d((t,x),\partial Q )> \eps \right\}.$  Moreover, for any $z\in L^1_{loc}(Q),$ we denote by $z_\eps$ the usual regularization of $z$ by convolution given and denoted by 
$$z_\eps := z\star \rho_\eps, \quad \hbox{ in }Q_\eps, $$ where  $\rho_\eps $ is the usual  mollifiers sequence defined here in $ \RR\times \RR^N.$   
   It is not difficult to see that  \eqref{form1} and \eqref{form2}    implies respectively
	\begin{equation}\label{formeps1}
		\partial_t u_\eps + 	V\cdot \nabla u_\eps -\nabla \cdot F_\eps \leq {J_{1}}_\eps+ \C_\eps \quad \hbox{ in }  Q_\eps
	\end{equation}
and 
\begin{equation}\label{formeps2}
	-\nabla \cdot F_\eps  \leq  {J_{2}}_\eps\quad \hbox{ in }  Q_\eps, 
\end{equation}
	 where  $\C_\eps$ is the usual commutator given by
	$$ \C_\eps:= V\cdot \nabla u_\eps - (V\cdot \nabla u)_\eps.$$  Using \eqref{formdiv}, here $(V\cdot \nabla u)_\eps$ needs to be understood in the sense 
\begin{equation}
(V\cdot \nabla u)_\eps = (u\: V)\star  \nabla \rho_\eps - (u\: \nabla \cdot V)\star \rho_\eps,\quad \hbox{ in }Q_\eps.
\end{equation} 	
Multiplying  \eqref{formeps1} by $\beta'(u_\eps)$ and  \eqref{formeps2} by $1-\beta'(u_\eps)$ and adding the resulting equations, we obtain
	$$\beta'(u_\eps) \:     \partial_t u_\eps + \beta'(u_\eps) \:  	V\cdot \nabla u_\eps -\nabla \cdot F_\eps       \leq   \C_\eps\:  \beta'(u_\eps) +  {J_{1}}_\eps\beta'(u_\eps) +  {J_{2}}_\eps(1-\beta'(u_\eps) ) \quad \hbox{ in }  Q_\eps$$
	and then
	\begin{equation}\label{formeps}
		\partial_t \beta(u_\eps)  +	V\cdot \nabla \beta(u_\eps) -\nabla \cdot F_\eps \leq \C_\eps\:  \beta'(u_\eps)  + {J_{1}}_\eps\beta'(u_\eps) +  {J_{2}}_\eps(1-\beta'(u_\eps)  )      \quad \hbox{ in }  Q_\eps.
	\end{equation}
	Since $V\in W^{1,1}_{loc} (\Omega)$ and $\nabla \cdot V\in L^\infty(\Omega),$ it is well  known by now that, by taking a subsequence if necessary,  the  commutator converges to $0$ in  $L^1_{loc}(Q),$ as $\eps \to 0$   (see for instance Ambrosio \cite{Ambrosio}).  So, letting $\eps\to  0$ in \eqref{formeps}, we get  \eqref{techineq}.
\end{proof}

 \bigskip 
Now, let us  prove how to  get the renormalized formulations \eqref{renormal+} and \eqref{renormal-}.
\begin{proof}[Proof of Proposition \ref{prenormal}]
	Thanks to Lemma \ref{leqp+}, by using the fact that
	$$	\nabla \cdot  V\:   \smo(p)= -u\: 	\nabla \cdot  V\:   \smo(p) \hbox{ and } 	\nabla \cdot  V\:   \spo(p)= u\: 	\nabla \cdot  V\:   \spo(p)  ,$$ we see that    \eqref{form1}  and \eqref{form2}  are  fulfilled with
	$$ F:= \nabla p^+,   \quad  J_{1}:= (f- u\:  \nabla \cdot V )(1- \smo(p)) $$
	and  	$$ J_{2}:= ( f-u\nabla \cdot V )\: \spo(p)  . $$
	Thanks to Lemma \ref{lrenormal}, 	for any $\beta\in \mathcal C^1(\RR)$ such that $0\leq \beta'\leq 1,$  we deduce that
		\begin{equation} 	\begin{array}{c}
		\partial_t \beta (u)- \Delta p^+ + 	V\cdot \nabla \beta(u)  + ( u\:  \nabla \cdot V-f )(1- \smo(p))  \beta'(u)  \\  \\  +   ( f- u \nabla \cdot V  )\: \spo(p) ( \beta'(u)-1)       \leq  0\quad \hbox{ in }  \D'(Q),
	\end{array}  	\end{equation}
Using again the fact that  $\nabla \cdot V \: \spo(p)	=u\: \nabla \cdot V \: \spo(p)$ this  implies that
		\begin{equation} 	\begin{array}{c}
			\partial_t \beta (u)- \Delta p^+ + 	V\cdot \nabla \beta(u)  +  u\:  \nabla \cdot V\: (1- \smo(p))  \beta'(u)    +   u\nabla \cdot V \: \spo(p) (1-\beta'(u))    \\  \\     \leq   f  (  \spo(p) (1-\beta'(u)) + (1- \smo(p))  \beta'(u) )  \quad \hbox{ in }  \D'(Q).
	\end{array}  	\end{equation}
	 
That  is
	\begin{equation} 	\begin{array}{c}
		\partial_t \beta (u)- \Delta p^+ + 	V\cdot \nabla \beta(u)  +  u\:  \nabla \cdot V
	( \beta'(u)( 1-\smo(p) - \spo(p) )   +  \spo(p) )     \\  \\     \leq   f  (     \beta'(u)   (1   -  \smo(p) -\spo(p) )    + \spo(p)  )  \quad \hbox{ in }  \D'(Q),
\end{array}  	\end{equation}
and then 
	\begin{equation} 	\begin{array}{c}
		\partial_t \beta (u)- \Delta p^+ + 	V\cdot \nabla \beta(u)  +  u\:  \nabla \cdot V
		( \beta'(u) \chi_{[p=0]}     +  \spo(p) )     \\  \\     \leq   f  (     \beta'(u)  \chi_{[p=0]}   + \spo(p)  )  \quad \hbox{ in }  \D'(Q).
\end{array}  	\end{equation}
Thus \eqref{renormal+}. At last, using the fact that  the couple $(-u,-p)$ is a solution of  \eqref{cmef} with $f$ replaced by $-f,$ and applying  \eqref{renormal+} to $(-u,-p)$, we deduce  \eqref{renormal-}. 

\end{proof}

See here that   the attendance of a nonlinear second order term creates an  obstruction to use standard approaches  for the uniqueness of weak solutions  based effectively on  the linearity of the first order term.  In order to   prove uniqueness in our case, we proceed by  doubling and de-doubling variable techniques. To this aim, we'll use the fact that the renormalized formulation implies some kind of  local  entropic inequalities.  This is the aim of the following   Lemma.

% Lemma
\begin{lemma}\label{pentropic}
Under the assumptions of Section \ref{sectionprem}, 	if   $(u,p)$ is a weak solution of  \eqref{cmef}, then we have
		\begin{equation} \label{evolentropic+}
		\begin{array}{c}
			\partial_t   (u-k)^+   - \Delta p^+  + 	\nabla \cdot(  (u-k)^+ \:  V)  +k\:  \nabla \cdot V \: \spo (u-k)       \\  \\
			\leq   f \:  \spo(u-k)
			\quad \hbox{ in }  \D'(Q),\quad \hbox{ for any }k<1
	\end{array}  	\end{equation}
	and
	\begin{equation} \label{evolentropic-}
		\begin{array}{c}
			\partial_t   (u-k)^-	-\Delta   p^-	+	 \nabla \cdot(  (k-u)^+  V)  -k  \:  \nabla \cdot V \:    \spo(k-u)      \\  \\       \leq  -f \:    \spo(k-u)   \quad \hbox{ in }  \D'(Q)\quad \hbox{ for any }k>-1  \end{array}
	\end{equation}

\end{lemma}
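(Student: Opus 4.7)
The plan is to derive both inequalities by applying the renormalized formulations of Proposition~\ref{prenormal} to a smooth approximation of the positive part $r \mapsto (r-k)^+$ (respectively $r \mapsto (r-k)^-$), and then passing to the distributional limit. Concretely, for \eqref{evolentropic+} fix $k < 1$ and choose a sequence $\{\beta_\delta\}_{\delta > 0} \subset \mathcal{C}^1(\RR)$ with $0 \leq \beta_\delta' \leq 1$ such that, as $\delta \to 0$, one has $\beta_\delta(r) \to (r-k)^+$ and $\beta_\delta'(r) \to \spo(r-k)$ pointwise and boundedly. Inserting $\beta = \beta_\delta$ in \eqref{renormal+} and rewriting the drift term as $V \cdot \nabla \beta_\delta(u) = \nabla \cdot (\beta_\delta(u)\,V) - \beta_\delta(u)\,\nabla \cdot V$, I land on
\begin{equation*}
\partial_t \beta_\delta(u) - \Delta p^+ + \nabla \cdot(\beta_\delta(u)\,V) + \bigl[u\bigl(\beta_\delta'(u)\chi_{[p=0]} + \spo(p)\bigr) - \beta_\delta(u)\bigr]\,\nabla \cdot V \leq f\bigl(\beta_\delta'(u)\chi_{[p=0]} + \spo(p)\bigr)
\end{equation*}
in $\D'(Q)$.

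Passing to the limit $\delta \to 0$ (by dominated convergence after testing against a nonnegative $\varphi \in \D(Q)$, using the $L^\infty$ bounds on $u$), the convective divergence converges to $\nabla \cdot((u-k)^+ V)$, and the task reduces to identifying the limits of the coefficients of $\nabla \cdot V$ and of $f$. The cornerstone here is the elementary identity
\begin{equation*}
u\,\spo(u-k) = (u-k)^+ + k\,\spo(u-k),
\end{equation*}
combined with the constraint $u \in \sign(p)$: on $[p > 0]$ we have $u = 1$ and $\spo(u-k) = 1$ (since $k < 1$); on $[p < 0]$ we have $u = -1$ and, provided $k \geq -1$, $\spo(u-k) = 0$; while on $[p=0]$ the identity above applies directly with $\spo(p) = 0$. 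A case-by-case check then collapses the limiting coefficient of $\nabla \cdot V$ to $k\,\spo(u-k)$ and that of $f$ to $\spo(u-k)$, which is exactly \eqref{evolentropic+}. (For the marginal range $k < -1$, $(u-k)^+ = u-k$ identically and the inequality is equivalent to \eqref{veq0} augmented with \eqref{eqp-1}, so it may be read off directly from Lemma~\ref{leqp+}.)

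The inequality \eqref{evolentropic-} I would obtain as a symmetric companion: since $(-u,-p)$ is again a weak solution of \eqref{cmef} with $f$ replaced by $-f$ (cf.\ the last step of the proof of Lemma~\ref{leqp+}), applying \eqref{evolentropic+} to $(-u,-p)$ with $-k$ in place of $k$ swaps $p^+ \leftrightarrow p^-$ and $(u-k)^+ \leftrightarrow (k-u)^+ = (u-k)^-$, producing exactly \eqref{evolentropic-}. Alternatively, one may repeat the direct argument with \eqref{renormal-} and a smooth approximation of $r \mapsto (r-k)^-$, with $\beta_\delta'(r) \to -\smo(k-u)$... rather, with $\beta_\delta'(r) \to -\spo(k-r)$.

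The main obstacle I expect is the careful bookkeeping of the algebraic identity above against the a.e.\ decomposition $\chi_{[p=0]} + \spo(p) + \smo(p) = 1$, so that the residual terms weighted by $\spo(p)$ and $\chi_{[p \neq 0]}$ cancel cleanly and leave only $k\,\spo(u-k)$ and $\spo(u-k)$ in the limiting inequality; the passage to the distributional limit itself is otherwise routine, because $\beta_\delta(u)$ and $\beta_\delta'(u)$ are uniformly bounded and $u \in L^\infty(Q)$ gives the required dominating functions.
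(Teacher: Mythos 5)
Your proposal is correct and follows essentially the same route as the paper: insert a $\mathcal C^1$ approximation of $r\mapsto (r-k)^+$ into \eqref{renormal+}, use that $u\in\sign(p)$ forces $\spo(u-k)\,\chi_{[p=0]}+\spo(p)=\spo(u-k)$ together with $u\,\spo(u-k)=(u-k)^++k\,\spo(u-k)$ to collapse the coefficients of $\nabla\cdot V$ and $f$, and obtain \eqref{evolentropic-} by applying the result to $(-u,-p)$ with $-f$. The only (shared, harmless) caveat is that the collapse on $[p<0]$ uses $k\geq -1$, which is the range relevant to the doubling-of-variables application and which you already flag.
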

 \begin{remark}
	See here  that  the entropic inequalities \eqref{evolentropic+} and \eqref{evolentropic-}   are local and do not proceed up to the boundary like in \cite{Ca}.  Indeed, in contrast of Carillo's approach, we are intending here to handle the boundary condition separately by working with the test functions $\xi_h$ combined with the outward pointing condition \eqref{HypVstg}.   In particular, as we'll see this would reduce the technicality of de-doubling variables  process (see the proof Proposition  \ref{PKato}).   
\end{remark}

\begin{proof}[Proof of Lemma \ref{pentropic}] Let us  consider
	$$\beta_\epsilon(r)=  \tilde \heps (r-k),\quad \hbox{ for any }r\in \RR, $$
	where
	$$\tilde \heps (r)=\left\{ \begin{array}{ll}   \frac{1}{2\eps} {r^+}^2 \quad & \hbox{ for } r\leq \eps \\
		r-\frac{\eps}{2} & \hbox{ elsewhere }.
	\end{array}    \right.  $$
	In this case,   $\beta_\epsilon'(1)=\heps^+ (1)= 1$, for any $0<\epsilon\leq \epsilon_0\hbox{ (small enough)},$ and  we have
	$$  \beta_\eps'(u) =  \hepsp(u-k)\to \spo(u-k),\quad \hbox{ as }\eps\to 0.$$
  For any  $   k< 1,$ since 
	$$\spo (u-k) \:  \chi_{[p=0]}  + \spo(p)  =   \spo(u-k),$$
	letting $\eps\to 0,$ in \eqref{renormal+} where we replace $\beta$ by $\tilde \heps,$ we get
	\begin{equation}
		\begin{array}{c}
			\partial_t   (u-k)^+   - \Delta p^+  + 	V\cdot \nabla (u-k)^+  +u\:  \nabla \cdot V \: \spo (u-k)       \\  \\
			\leq   f \:  \spo(u-k)
			\quad \hbox{ in }  \D'(Q),
	\end{array}  	\end{equation}
	which implies \eqref{evolentropic+}. \\
	 For the second part of the Lemma, we use again the fact that see that   $(\tilde u:=-u,\tilde p:= -p)$ is   a weak solution of  \eqref{cmef} with  $f$ replaced by $\tilde f:=-f.$  So, using
	\eqref{evolentropic+} for $(\tilde u,\tilde p)$ with $\tilde f,$ for any $k< 1,$ we have
	\begin{equation}
		\begin{array}{c}  	-\Delta \tilde p ^+ 	+	 \nabla \cdot(  (\tilde u-k)^+  V)  + k   \:  \nabla \cdot V \:    \spo(\tilde u-k)       \\  \\     \leq    \tilde f \:    \spo(\tilde u-k)  \quad \hbox{ in }  \D'(Q).
		\end{array}
	\end{equation}
	This implies that, for any $-1< s ,$ we have
	\begin{equation}
		\begin{array}{c}  	\partial_t   (s-u)^-	-\Delta   p^-	+	 \nabla \cdot(  (s-u)^+  V)  -s  \:  \nabla \cdot V \:    \spo(s-u)        \\  \\  \leq  -f\:    \spo(s-u)    \quad \hbox{ in }  \D'(Q).  \end{array}
	\end{equation}
Thus the result of the lemma.
\end{proof}

\bigskip
Now, we are    able   to prove  some kind of ''local'' Kato's inequality which generates  our $L^1-$approach for the uniqueness.

 \bigskip
 %%%%%%
 \begin{proposition}[Kato's inequality]\label{PKato}
 Under the assumptions of Section \ref{sectionprem},  if   $(u_1,p_1)$  and  $(u_2,p_2)$   are two  couples  of  $L^\infty(Q) \times  L^2	\left(0,T;H^1_D(\Omega)\right) $   satisfying  \eqref{evolentropic+}  and  \eqref{evolentropic-} corresponding to $f_1\in L^1(Q)$ and  $f_2\in L^1(Q)$ respectively, then   there exists $\kappa\in L^\infty(Q),$  such that  $\kappa\in \signp(u_1-u_2)$ a.e. in $Q$ and 
 	\begin{equation}\label{ineqkato}
 		\begin{array}{c}
 			\partial_ t	( u_1-u_2 )^+
 			- \Delta      (  p_1^+  +  p_2 ^-)  + \nabla \cdot \left(
 			( u_1-u_2 )^+  \: V \right)      \leq  \kappa  ( f_1-f_2 )  \,
 			\hbox{ in }\D'(Q).
 		\end{array}
 	\end{equation}
 \end{proposition}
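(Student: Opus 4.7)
The plan is a doubling-of-variables argument \`a la Kruzhkov, applied to the local entropy inequalities of Lemma \ref{pentropic}, with a DiPerna-Lions style commutator to handle the space-dependent drift. Work on $Q\times Q$ with two independent copies of variables, $(t,x)$ and $(s,y)$. Apply \eqref{evolentropic+} to $(u_1,p_1,f_1)$ in the variables $(t,x)$, treating $k:=u_2(s,y)\in[-1,1]$ as a parameter (the restriction $k<1$ is handled either by approximating with $k=u_2-\eps$ and sending $\eps\downarrow 0$, or by noting that $\{u_2=1\}$ contributes trivially since $u_1\le 1$ forces $(u_1-u_2)^+=0$ there). Symmetrically, apply \eqref{evolentropic-} to $(u_2,p_2,f_2)$ in the variables $(s,y)$ with $k:=u_1(t,x)\in[-1,1]$. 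Using the identities $(u_2-k)^-=(k-u_2)^+=(u_1-u_2)^+$ and $\spo(k-u_2)=\spo(u_1-u_2)=\spo(u_1-k)$, both inequalities now carry the same evolving quantity $(u_1-u_2)^+$ multiplied by the same Heaviside factor on the right.

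I then sum the two inequalities as distributions on $Q\times Q$ and pair them with the tensor-product mollifier
$$\phi_n(t,x,s,y)=\xi(t,x)\,\rho_n(t-s)\,\omega_n(x-y),\qquad 0\le \xi\in \D(Q),$$
with $\rho_n,\omega_n$ symmetric mollifiers in time and space, and let $n\to\infty$. Four things happen: (i) the time derivatives combine, since $(\partial_t+\partial_s)\rho_n(t-s)=0$, yielding a single $\partial_t(u_1-u_2)^+$ on the diagonal; (ii) the Laplacian terms $-\Delta_x p_1^+(t,x)$ and $-\Delta_y p_2^-(s,y)$ act in independent variables and, after integration by parts and collapse to the diagonal, produce $-\Delta(p_1^+ + p_2^-)$; (iii) the two space-divergences of the drift terms, together with the $k\,\nabla\cdot V$ corrections, reassemble into $\nabla\cdot\bigl((u_1-u_2)^+V\bigr)$ up to a commutator handled below; (iv) the right-hand sides collapse into $\kappa(f_1-f_2)$ with $\kappa:=\spo(u_1-u_2)$, a measurable selection of $\signp(u_1-u_2)$. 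Since $\xi\in\D(Q)$, the resulting inequality \eqref{ineqkato} holds in $\D'(Q)$.

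The main obstacle is step (iii), namely the de-doubling of the drift term when $V$ depends on $x$. Using $\nabla_x\omega_n(x-y)=-\nabla_y\omega_n(x-y)$, one is left with the commutator remainder
$$\int\!\!\!\int\!\!\!\int (u_1-u_2)^+\,(V(y)-V(x))\cdot\nabla_x\omega_n(x-y)\,\xi(t,x)\,\rho_n(t-s)\,dx\,dy\,dt\,ds,$$
which does not vanish a priori because $V$ is spatially varying: for constant or purely time-dependent $V$ the integrand collapses to a total derivative in $x-y$ and there is nothing to do. The $W^{1,1}_{loc}$ regularity of $V$ together with $\nabla\cdot V\in L^\infty(\Omega)$ permits the DiPerna-Lions commutator lemma, already invoked in Lemma \ref{lrenormal}, to show that this remainder tends to $0$ in $L^1_{loc}(Q)$ as $n\to\infty$. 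This is the delicate step the authors defer to Section~6, and it is what makes the Kruzhkov machinery compatible with an arbitrarily pointing drift inside $\Omega$; once it is in place, items (i), (ii) and (iv) are routine and the local Kato inequality \eqref{ineqkato} follows.
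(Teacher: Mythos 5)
Your overall strategy is the same as the paper's: sum the two local entropy inequalities of Lemma \ref{pentropic} in doubled variables and de-double with a product mollifier, with the drift handled by a commutator computation. But the key step (iii) contains a genuine error. The remainder you display,
\begin{equation}
\int\!\!\!\int\!\!\!\int (u_1-u_2)^+\,(V(y)-V(x))\cdot\nabla_x\omega_n(x-y)\,\xi(t,x)\,\rho_n(t-s)\,dx\,dy\,dt\,ds,
\end{equation}
does \emph{not} tend to zero. Since $\nabla\omega_n$ scales like $n^{N+1}$ while $V(y)-V(x)$ is only of order $|x-y|$ in the averaged ($W^{1,1}_{loc}$) sense, the product is of order one, and the limit is $\int_0^T\!\!\int_\Omega (u_1-u_2)^+\,\nabla\cdot V\,\xi\,dx\,dt$ --- this is exactly what Lemma \ref{Ltech1} in the Appendix computes, and its conclusion is $\int h(x,x)\,\nabla\cdot V(x)\,\xi(x)\,\psi(t)$, not $0$. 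What the DiPerna--Lions lemma sends to zero is the \emph{full} commutator, i.e.\ this difference-quotient integral \emph{plus} the mollification of $u\,\nabla\cdot V$; you have silently dropped that correction. The nonzero limit is not a nuisance but the crux of the argument: after collapsing to the diagonal, the $k\,\nabla\cdot V$ corrections coming from \eqref{evolentropic+} and \eqref{evolentropic-} survive as $(u_2-u_1)\,\spo(u_1-u_2)\,\nabla\cdot V=-(u_1-u_2)^+\,\nabla\cdot V$, and it is precisely the limit of the difference-quotient term that cancels them, leaving the clean divergence $\nabla\cdot\left((u_1-u_2)^+V\right)$. If your remainder really vanished, the final inequality would carry an uncancelled term $-(u_1-u_2)^+\,\nabla\cdot V$, which has no sign (only $\nabla\cdot V\in L^\infty$ is assumed), and \eqref{ineqkato} would not follow except when $\nabla\cdot V=0$.

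Two smaller points. First, your alternative of plugging $k=u_2(s,y)$ directly and arguing that $\{u_2=1\}$ ``contributes trivially'' is not available: \eqref{evolentropic+} is simply not asserted for $k=1$, so you must use the shift $k=u_2(s,y)-\tau$ (your first option, and the paper's choice) and pass $\tau\to0$ only at the very end. Second, you cannot set $\kappa=\spo(u_1-u_2)$ outright: because $\spo$ is discontinuous, the de-doubling of $\spo\bigl(u_1(t,x)-u_2(s,y)+\tau\bigr)\,\bigl(f_1(t,x)-f_2(s,y)\bigr)$ only yields, by $L^\infty$--weak$^*$ compactness and the monotonicity of the graph, some $\kappa_\tau\in\signp(u_1-u_2+\tau)$, and then a further weak$^*$ limit $\kappa\in\signp(u_1-u_2)$ as $\tau\to0$; this is why the proposition asserts only the existence of such a $\kappa$.
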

 %%%%%% 
 \begin{proof}
 	The proof of this lemma is based on  doubling and de-doubling variable techniques.  Let us give here briefly the arguments.  To double the variables, we fix $\tau>0$, and since $u_1(s,y)-\tau<1,$ we use the fact that $(u_1,p_1)$ satisfies  \eqref{evolentropic+}  with $k=u_1(s,y)-\tau, $ we have
 	\begin{equation}
 		\begin{array}{c}  \frac{d}{dt}   \int     (u_1(t,x)-u_2(s,y) +\tau )^+   \:
 			\zeta  + \int   (\nabla_x   p_1^+ (t,x) - ( u_1(t,x)-u_2(s,y) +\tau )^+ \:   V(x)   \cdot   \nabla_x\zeta     \\      + \int_\Omega  \nabla_x \cdot V \:  u_2(s,y)  \zeta   \spo   (u_1(t,x)-u_2(s,y) +\tau  )  \leq \int f_1(t,x) \spo   (u_1(t,x)-u_2(s,y) +\tau )  \: \zeta, \end{array}
 	\end{equation}
 for any $0\leq \eta\in \D(\Omega\times \Omega).$ 	See that   $ \int    \nabla_y  p_2 ^- (s,y)   \cdot   \nabla_x\zeta  \: dx =0,$ so that
 	\begin{equation}
 		\begin{array}{c}   \frac{d}{dt}   \int     (u_1(t,x)-u_2(s,y) +\tau )^+    \:
 			\zeta  + \int  ( \nabla_x   p_1^+  (t,x) +\nabla_y   p_2 ^-  (t,x) )   \cdot   \nabla_x\zeta -  \int    ( u_1(t,x)-u_2(s,y) +\tau )^+  \:   V(x)   \cdot   \nabla_x\zeta    \\   +  \int  \nabla_x \cdot V \:  u_2(s,y) \:  \zeta\:    \spo   (u_1(t,x)-u_2(s,y) +\tau ) \leq    \int  f_1(t,x) \spo    (u_1(t,x)-u_2(s,y) +\tau)   \: \zeta     . \end{array}
 	\end{equation}
 	Denoting  by
 	$$u(t,s,x,y)=  u_1(t,x) -u_2(s,y)+\tau  ,\quad \hbox{ and }  \quad    p(t,s,x,y)=   p_1^+  (t,x) +  p_2 ^-  (s,y) ,$$
 	and  integrating with respect to $y,$ we obtain
 	\begin{equation}
 		\begin{array}{c}  \frac{d}{dt}   \int    u(t,s,x,y)^+   \:
 			\zeta  +   \int\!\! \int  (\nabla_x+\nabla_y)  p(t,s,x,y)   \cdot   \nabla_x\zeta -   \int\!\! \int  u(t,s,x,y) ^+  \:   V(x)   \cdot   \nabla_x\zeta    \\    +  \int\!\! \int    \nabla_x \cdot V \:  u_2(s,y) \:  \zeta\:    \spo   u(t,s,x,y)    \leq     \int\!\! \int   f_1(t,x) \spo   u(t,s,x,y)    \: \zeta    . \end{array}
 	\end{equation}
 	On the other hand, since $u_1(t,x)+\tau >-1,$ using the fact that $(u_2,p_2 )$ satisfies  \eqref{evolentropic-} with $k=u_1(t,x)+\tau,$ we have
 	\begin{equation}
 		\begin{array}{c}
 			\frac{d}{ds}   \int    u(t,s,x,y)^+   \:
 			\zeta      + 	  \int   (\nabla_y   p_2 ^- (s,y) - u(t,s,x,y) ^+ \:   V(y)   \cdot   \nabla_y\zeta      \\    - \int_\Omega  \nabla_y \cdot V \:  u_1(t,x)  \zeta   \spo  (u(t,s,x,y))   \leq - \int f_2(s,y) \spo ( u(t,s,x,y) ) \: \zeta   .\end{array}
 	\end{equation}
 	Working in the same way,    we get
 	\begin{equation}
 		\begin{array}{c}   
 			\frac{d}{ds}   \int    u(t,s,x,y)^+  \:
 			\zeta      +    \int\!\! \int   (\nabla_x+\nabla_y)  p(t,s,x,y)     \cdot   \nabla_y\zeta -  \int\!\! \int  u(t,s,x,y) ^+  \:   V(y)   \cdot   \nabla_y\zeta   \\     -   \int\!\!  \int   \nabla_y \cdot V(y) \:  u_1(t,x)   \:  \zeta\:    \spo (  u(t,s,x,y) )   \leq   -  \int\!\! \int   f_2(s,y)  \spo   (u(t,s,x,y) )  \: \zeta       . \end{array}
 	\end{equation}
 	Adding both inequalities, we obtain
 	\begin{equation}\label{formdoubling1}
 		\begin{array}{c}   \left(   \frac{d}{dt}  +   \frac{d}{ds}   \right)    \int\!\! \int u(t,s,x,y) ^+   \:
 			\zeta  + \int\!\! \int   (\nabla_x+\nabla_y)   p(t,s,x,y)    \cdot   (\nabla_x +\nabla_y)\zeta  \\ -  \int\!\! \int    u(t,s,x,y) ^+  \: (   V(x)   \cdot    \nabla_x\zeta +V(y)   \cdot  \nabla_y\zeta)    \\  +  \int\!\!  \int  \left(   \nabla_x \cdot V(x) \:  u_2(s,y)    -   \nabla_y \cdot V(y) \:  u_1(t,x)    \right) \:  \zeta\:    \spo   (u(t,s,x,y) )  \\  
 			\leq   \int\!\!  \int  ( f_1(t,x) -  f_2(s,y) )  \spo ( u(t,s,x,y) )   \: \zeta    .\end{array}
 	\end{equation}
 	 	Now, we can de-double the variables $t$ and $s,$ as well as $x$ and $y,$ by taking as usual the sequence of test functions  
 	\begin{equation}    
 		\psi_\eps(t,s) = \psi\left( \frac{t+s}{2}\right)   \rho_\eps \left( \frac{t-s}{2}\right)    \hbox{ and }  
 		\zeta_\lambda  (x,y) = \xi\left( \frac{x+y}{2}\right) \delta_\lambda \left( \frac{x-y}{2}\right),
 	\end{equation}
 	for any $t,s\in   (0,T)$ and $x,y\in \Omega.$  Here   $\xi\in \D(\Omega),$   $\psi\in \D(0,T),$  $\rho_\eps$  is a sequence of usual  mollifiers in $\RR,$ and  $\delta_\lambda$ is the  sequence of    mollifiers in $\RR^N$  given for instance by \eqref{molifiers}. 
See that 
 	$$   \left(   \frac{d}{dt}  +   \frac{d}{ds}   \right)   \psi_\eps(t,s)  =  \rho_\eps\left( \frac{t-s}{2}\right)  \dot   \psi \left( \frac{t+s}{2}\right)   $$ 
 	and 
 	$$(\nabla_x+ \nabla_y)  \zeta_\lambda  (x,y)  =   \delta_\lambda \left( \frac{x-y}{2}\right)  \nabla \xi\left( \frac{x+y}{2}\right)  $$
 	Moreover,   for  any $h\in L^1((0,T)^2\times \Omega^2)$ and   $\Phi\in L^1((0,T)^2\times  \Omega^2)^N,$   it is not difficult to prove that 
 	\begin{itemize}
 		\item  $\lim_{\lambda \to 0 }\lim_{\eps \to 0 } \int_0^T\!\! \int_0^T\!\!   \int_\Omega\!\!  \int_\Omega h(t,s,x,y)\:\zeta_\lambda(x,y)\: \psi_\eps(t,s)   = \int_0^T\!\!  \int_\Omega  h(t,t,x,x)\: \xi(x)\:  \psi(t) .$
 		
 		\item  $\lim_{\lambda \to 0 }\lim_{\eps \to 0 } \int_0^T\!\! \int_0^T\!\!   \int_\Omega\!\!  \int_\Omega h(t,s,x,y)\:\zeta_\lambda(x,y)\: \left(\frac{d}{dt} + \frac{d}{ds}\right)\psi_\eps(t,s)  = \int_0^T\!\!   \int_\Omega  h(t,t,x,x)\: \xi(x)\:  \dot \psi(t) .$ 
 		
 		\item $\lim_{\lambda \to 0 }\lim_{\eps \to 0 }   \int_0^T\!\!  \int_\Omega\!\!  \int_\Omega \Phi(t,s,x,y) \cdot  (\nabla_x + \nabla_y) \zeta_\lambda(x,y) \: \psi_\eps(t,s)  = \int_0^T\!\!\int_\Omega  \Phi(t,t,x,x)\cdot \nabla \xi(x)\: \psi(t)\: dtdx   .$
 	\end{itemize}  
 	So replacing $\zeta$ in \eqref{formdoubling1} by $\zeta_\lambda$,  testing with $\psi_\eps$  and,  letting $\eps\to 0$ and $\lambda \to 0,$ we get  
 	\begin{equation}\label{de-double }
 		\begin{array}{c}
 		-\int_0^T\!\! \int_\Omega \Big\{  (u_1-u_2)^+   \:
 			\xi \: \dot \psi   +  \nabla (p_1^++p_2^-)  \cdot \nabla \xi\:   \psi  
 			-      (u_1-u_2+\tau )^+  \: \left(    V    \cdot   \nabla \xi  \:   +       \nabla \cdot V    \: \xi \: \right)  \psi  \Big\}  \\   \leq   \int_0^T\!\! \int_\Omega  \kappa_\tau  (x) (f_1-f_2) \: \xi   \:   \psi   +
 			\lim_{\lambda \to 0 } \int_0^T\!\!  \int_\Omega \!\!\int_\Omega     u(t,t,x,y)^+    \:    (V(x) -V(y))   \cdot \nabla_y  \zeta  \:   \psi     , \end{array}
 	\end{equation}
 where $\kappa_\tau \in L^\infty(Q)$ is such that $\kappa_\tau \in \signp(u_1-u_2+\tau ) $ a.e. in $Q.$ 
%  Notice here that the emergence of $\kappa_\tau$ follows from the $L^1-$compactness of the term $u$ and the $L^\infty-\hbox{weak}^*$ compactness of $\spo(u),$ as well as the monotonicity of the graph  $\signp.$   
  Thus 
 \begin{equation}\label{dedouble}
 	\begin{array}{c}
 		\frac{d}{dt}    \int (u_1-u_2+\tau )^+   \:
 		\xi   + \int  \nabla (p_1^++p_2^-)  \cdot \nabla \xi
 		-   \int  (u_1-u_2+\tau )^+  \:  \left(   V    \cdot   \nabla \xi  +    \nabla \cdot V     \: \xi \right) \\   \leq   \int \kappa (x) (f_1-f_2) \: \xi   +
 		\lim_{\lambda \to 0 } \int_\Omega \!\!\int_\Omega     u(t,t,x,y)^+    \:    (V(x) -V(y))   \cdot \nabla_y  \zeta . 
 	 \end{array}
 \end{equation} 
  To pass to the limit in the last term corresponding to the vector filed $V,$ we use  moreover the technical result of Lemma \ref{Ltech1} (cf. Appendix)  which  is more or less well known. We put back its  proof in the Appendix.  This implies that  
 	\begin{equation}
 		\begin{array}{c}
 			\frac{d}{dt}    \int ( u_1(t,x) -u_2(t,x)+\tau )^+  \:
 			\xi \: dx   + \int    \nabla    (p_1^+  +   p_2 ^- )\cdot \nabla \xi\: dx  
 		  \\  	-   \int    ( u_1-u_2+\tau)^+  \:    V    \cdot   \nabla \xi  \: dx     \leq   \int \kappa_\tau ( f_1-f_2 )\: \xi\: dx 
 			. \end{array}
 	\end{equation}
 	Letting then $\tau\to 0,$ and using again the fact that  $\kappa_\tau\to \kappa$ weakly to $L^\infty(Q)$, with $\kappa\in \sign^+(u_1-u_2),$ a.e. in $Q,$ the result of the proposition follows. 
  \end{proof}

 \bigskip 
 The aim now  is to  process  with the sequence of test function $\xi_h$ given by \eqref{xih} in Kato's inequality and let $h\to 0,$ to cover  \eqref{evolineqcomp}. 
 
 %Lemma
 \begin{lemma}\label{ltechevol1} Under the assumptions of  Section \ref{sectionprem}, if  $(u,p)$ is  a  weak solution of \eqref{cmef}, then for any $0\leq \psi\in D(0,T)$,  we have
 	\begin{equation} \label{signp+}
 		\liminf_{h\to 0} \int_0^T\!\!\int_\Omega \nabla   p^\pm   \cdot \nabla \xi_h \: \psi(t)\: dtdx \geq 0.
 	\end{equation}
 
 \end{lemma}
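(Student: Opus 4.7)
The plan is to test the one-sided distributional inequalities \eqref{eqp+1} and \eqref{eqp-1} from Lemma \ref{leqp+} against the \emph{complement} $\eta_h := 1-\xi_h$ rather than against $\xi_h$ itself. The key observation is that these inequalities are stated in $\D'((0,T)\times\overline\Omega)$, so they accept any non-negative test function in $H^1(\Omega)$ and not merely those vanishing on $\partial\Omega$: inspecting the derivation of Lemma \ref{leqp+}, the spatial test function $\xi$ enters only through $\xi\,\hepsp(p)\,\psi$, which already lies in $H^1_D$ as soon as $\xi\in H^1(\Omega)$, because $p\in H^1_D(\Omega)$ forces $\hepsp(p)=0$ on $\Gamma_D$. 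Note that testing \eqref{eqp+1} against $\xi_h$ directly would produce an \emph{upper} bound on $\int\nabla p^+\cdot\nabla\xi_h\,\psi$, not the lower bound we need; using $\eta_h$ precisely reverses the sign.

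The function $\eta_h$ is non-negative, lies in $H^1(\Omega)$, satisfies $\nabla\eta_h=-\nabla\xi_h$, and converges to $0$ pointwise on $\Omega$ as $h\to 0$. Testing \eqref{eqp+1} against $\eta_h(x)\,\psi(t)\ge 0$ and rearranging yields
\begin{equation*}
\int_0^T\!\!\int_\Omega \nabla p^+\cdot\nabla\xi_h\,\psi\,dtdx \;\ge\; \int_0^T\!\!\int_\Omega(\nabla\cdot V-f)\,\spo(p)\,\eta_h\,\psi\,dtdx.
\end{equation*}
Since $\nabla\cdot V\in L^\infty(\Omega)$, $f\in L^1(Q)$, $|\spo(p)|\le 1$ and $\psi$ is bounded with compact support in $(0,T)$, the integrand on the right is dominated by an $L^1(Q)$-function; combined with $\eta_h\to 0$ a.e.\ in $Q$ and $0\le\eta_h\le 1$, dominated convergence sends the right-hand side to $0$ as $h\to 0$. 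Hence $\liminf_{h\to 0}\int_0^T\!\!\int_\Omega\nabla p^+\cdot\nabla\xi_h\,\psi\,dtdx\ge 0$.

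Applying the same argument to \eqref{eqp-1}---testing against $\eta_h\psi$ and invoking dominated convergence on $(\nabla\cdot V+f)\,\smo(p)\in L^1(Q)$---gives the analogous lower bound for $p^-$, thereby establishing both sign-variants of \eqref{signp+}.

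The only step that warrants vigilance is the admissibility of $\eta_h$ as a spatial test function for \eqref{eqp+1}--\eqref{eqp-1}; once this boundary-regularity point is granted, the proof reduces to a sign-flip in the choice of test function and a single pass to the limit by dominated convergence, both of which are essentially free.
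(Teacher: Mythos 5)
Your proposal is correct and follows essentially the same route as the paper: the paper likewise rewrites $\nabla\xi_h=-\nabla(1-\xi_h)$, tests \eqref{eqp+1} (resp.\ \eqref{eqp-1}) against the nonnegative function $(1-\xi_h)\psi$, and sends the right-hand side to zero using $\xi_h\to 1$ in $L^\infty$-weak$^*$, which is the same limit you justify by dominated convergence. Your added remark on the admissibility of $1-\xi_h$ as a test function for an inequality posed in $\D'((0,T)\times\overline\Omega)$ is a sensible precaution but does not change the argument.
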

 \noindent\textbf{Proof : } Using \eqref{eqp+1}, we see that,   for any $0\leq \psi\in D(0,T)$  we have 
  \begin{eqnarray}
 \int_0^T\!\! 	\int_\Omega  \nabla   p^+ \cdot \nabla   \xi_h \: \psi\: dtdx   &=&   -   \int_0^T\!\! 	\int_\Omega  \nabla   p^+  \cdot \nabla  (1- \xi_h)\: \psi\: dtdx   \\  \\ 
	&\geq&   \int_0^T\!\! 	\int_\Omega  ( \nabla \cdot V - f)   (1- \xi_h)     \spo(p)\: \psi\: dtdx    
	 \end{eqnarray} 
Then, letting $h\to 0$ and using the fact that  	$   \xi_h   \to 1 \hbox{    in }L^\infty(\Omega)-\hbox{weak}^*,$  we deduce that    $ 	\liminf_{h\to 0}  \int_0^T\!\! 	\int_\Omega  \nabla   p^+ (t) \cdot \nabla   \xi_h \: \psi\: dtdx   \geq 0.$ 
The proof for $p^-$   follows in a similar way  by using   \eqref{eqp-1}. 
 
  \qed

%Lemma
 \begin{lemma}\label{pintegral} Under the assumptions of  Theorem \ref{compcmef},  
if  $(u_1,p_1)$  and  $(u_2,p_2)$   are two  couples  of  $L^\infty(Q) \times  L^2	\left(0,T;H^1_D(\Omega)\right) $   satisfying  \eqref{evolentropic+}  and  \eqref{evolentropic-} corresponding to $f_1\in L^1(Q)$ and  $f_2\in L^1(Q)$ respectively, then   there exists $\kappa\in L^\infty(Q),$  such that  $\kappa\in \signp(u_1-u_2)$ a.e. in $Q$ and \eqref{evolineqcomp} is fulfilled.
 \end{lemma}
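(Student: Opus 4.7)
The plan is to insert the cut-off $\xi_h$ from \eqref{xih} as a spatial test function in Kato's local inequality \eqref{ineqkato} and to pass to the limit $h\to 0^+$, exploiting the sign controls prepared in Lemma \ref{ltechevol1} and in the assumption \eqref{HypV}. Since $\xi_h\in H^1_0(\Omega)\cap L^\infty(\Omega)$, a density argument legitimises this choice of test function, and for every $0\le\psi\in\D(0,T)$ one obtains
\begin{equation*}
-\int_0^T\!\!\int_\Omega (u_1-u_2)^+\,\xi_h\,\dot\psi
+\int_0^T\!\!\int_\Omega \nabla(p_1^++p_2^-)\cdot\nabla\xi_h\,\psi
-\int_0^T\!\!\int_\Omega (u_1-u_2)^+\,V\cdot\nabla\xi_h\,\psi
\le\int_0^T\!\!\int_\Omega \kappa\,(f_1-f_2)\,\xi_h\,\psi.
\end{equation*}

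The first term and the right-hand side are handled by dominated convergence, using $0\le\xi_h\le 1$ and $\xi_h\to 1$ pointwise in $\Omega$: their limits are $-\int\!\!\int (u_1-u_2)^+\dot\psi$ and $\int\!\!\int \kappa(f_1-f_2)\psi$ respectively. For the diffusion term I apply Lemma \ref{ltechevol1} to $p_1^+$ and to $p_2^-$ separately and sum, which yields $\liminf_{h\to 0}\int\!\!\int\nabla(p_1^++p_2^-)\cdot\nabla\xi_h\,\psi\ge 0$.

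The remaining (and principal) difficulty is the drift term. Since $\nabla\xi_h=-\nu_h$ and is supported in the strip $D_h=\{d(\cdot,\partial\Omega)<h\}$, I rewrite
\begin{equation*}
-\int_0^T\!\!\int_\Omega (u_1-u_2)^+\,V\cdot\nabla\xi_h\,\psi
=\int_0^T\!\!\int_{D_h}(u_1-u_2)^+(t,x)\,\psi(t)\,V(x)\cdot\nu_h(x)\,dt\,dx,
\end{equation*}
and then invoke \eqref{HypV} for each fixed $t$ with the nonnegative function $x\mapsto(u_1-u_2)^+(t,x)\,\psi(t)$. This produces $\liminf_{h\to 0}\ge 0$ of the inner spatial integral pointwise in $t$; a Fatou-type argument in the time variable, justified by the $L^\infty(Q)$ bound on $(u_1-u_2)^+$ together with $\nabla\cdot V\in L^\infty(\Omega)$ (which jointly supply an $L^1(0,T)$-uniform lower envelope for the inner integrals), promotes this to $\liminf_{h\to 0}\ge 0$ of the full double integral. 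This is the step I expect to require the most care, since the $O(1/h)$ size of $\nu_h$ on $D_h$ is only cancelled by the $O(h)$ measure of the strip, so the bound on $V$ and on $(u_1-u_2)^+$ must be used in a uniform-in-$h$ way.

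Combining the four pieces and taking $\liminf_{h\to 0}$ in the tested inequality, the left-hand side is bounded below by $-\int\!\!\int(u_1-u_2)^+\dot\psi$ while the right-hand side converges to $\int\!\!\int \kappa(f_1-f_2)\psi$. This yields
\begin{equation*}
-\int_0^T\!\!\int_\Omega (u_1-u_2)^+\,\dot\psi\,dt\,dx\le\int_0^T\!\!\int_\Omega \kappa\,(f_1-f_2)\,\psi\,dt\,dx\quad\forall\,0\le\psi\in\D(0,T),
\end{equation*}
which is precisely the distributional inequality \eqref{evolineqcomp} claimed in the lemma. The conceptual heart of the argument is that the two boundary-layer contributions -- the one coming from $\Delta(p_1^++p_2^-)$ and the one coming from the drift $V$ -- both carry the correct sign thanks to Lemma \ref{ltechevol1} and \eqref{HypV}, so they may be safely discarded when passing to the $\liminf$.
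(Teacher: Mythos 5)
Your proposal is correct and follows essentially the same route as the paper: test Kato's inequality \eqref{ineqkato} with $\xi_h$, discard the diffusion boundary-layer term via Lemma \ref{ltechevol1} and the drift boundary-layer term via the outgoing condition \eqref{HypV}, then let $h\to 0$. The only difference is that you spell out the Fatou-type interchange in the time variable for the drift term, which the paper leaves implicit.
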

 \begin{proof}
 	See that
 	\begin{eqnarray}
 	  \frac{d}{dt}	\int	( u_1-u_2 )^+ \: dx - \int \kappa (f_1-f_2) \: dx   &=&   \lim_{h\to 0}  \underbrace{\frac{d}{dt} 	\int
  	( u_1-u_2 )^+    \, \xi_h \: dx    -\int   \kappa (f_1-f_2)   \xi_h  \: dx}_{ I(h) }.  
 	\end{eqnarray}
 	Taking  $\xi_h$ as a test function in \eqref{ineqkato},  we have
 	\begin{eqnarray}\label{inqxih}
 		I(h)
 	&	\leq&  -    \int \left(  \nabla  (  p_1^+  +  p_2^-)   -
 		( u_1-u_2 )^+  \: V \right)  \cdot\nabla  \xi_h  \: dx  \\ 
 		&\leq& -    \int   \nabla  (  p_1^+  +  p_2^-)     \cdot\nabla  \xi_h  \: dx -  \int
 		( u_1-u_2 )^+  \: V   \cdot \nu_h(x) \: dx    . 
	 	\end{eqnarray}	
Using Lemma \ref{ltechevol1}, this implies that  
 	\begin{equation} \label{mainuniq} 
 	\begin{array}{ll}
 			\lim_{h\to 0}I(h)  	&\leq     -	\lim_{h\to 0}   \int
 		( u_1-u_2 )^+  \: V   \cdot \nu_h(x) \: dx  \\  \\
 		&\leq 0
 		\end{array}
 	\end{equation}
 	where we use the outgoing  velocity vector field  assumption \eqref{HypVstg}.  Thus \eqref{evolineqcomp}.
 \end{proof}

 \bigskip
 \begin{proof}[Proof of Theorem \ref{compcmef}]  It clear that the first part follows by Lemma \ref{pintegral}. The rest of the theorem  is a straightforward  consequence of  \eqref{evolineqcomp}. 
 	\end{proof}

 % Remark
 \begin{remark}\label{RemboundaryCond2}
See that Lemma \ref{pintegral}  is the lonely step of the proof of Theorem \ref{compcmef} where we use assumption \eqref{HypVstg} (see \eqref{mainuniq}).  Working so   enables to avoid all the technicality related to 
doubling and de-doubling variable by using test functions which does not vanish on the boundary.  
 We do believe that the result of Theorem \ref{compcmef} remains to be true   under the general assumption \eqref{HypV0}.   We think that this works at least for Dirichlet boundary condition following Carrillo's techniques (cf. \cite{Car}) but it could  be  heavy and very technical.  One sees also that, if the solutions have a trace (like for $BV$ solution), maybe one can  weaken this condition by handling \eqref{mainuniq} otherwise.  \end{remark}

 %Existence and uniqueness for the evolution problem
 \section{Existence  for the evolution problem}
 \setcounter{equation}{0}

Throughout this section, we assume that  the assumptions of Section \ref{sectionprem} and   \eqref{HypVstg} are fulfilled. Our main result here concerns existence of a solution.

%Theorem 
\begin{theorem}\label{existcmef}
	For any $f\in L^2(Q)$ and $u_0\in L^\infty(\Omega)$ be such that $\vert u_0\vert \leq 1$ a.e. in $\Omega,$ the problem \eqref{cmef} has a unique   solution  $u\in \C([0,T),L^1(\Omega))$ and $u(0)=u_0$ in the sense of Definition \ref{defws}. Moreover, the solution satisfies all the properties of Theorem \ref{compcmef}. 
\end{theorem}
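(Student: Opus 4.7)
The strategy is to generate the solution by Crandall--Liggett nonlinear semigroup theory in $L^1(\Omega)$, using the $L^1$--contraction of Theorem~\ref{compcmef} as the pivotal estimate to ensure that the underlying multivalued operator is $m$--accretive. I first introduce the stationary scheme corresponding to the Euler implicit time discretization: given $\eps>0$ and $\phi\in L^\infty(\Omega)$ with $|\phi|\leq 1$, one seeks $(u,p)\in L^\infty(\Omega)\times H^1_D(\Omega)$ with $u\in\sign(p)$ a.e.\ in $\Omega$ and
\begin{equation}
	u+\eps\bigl(-\Delta p+\nabla\cdot(uV)\bigr)=\phi\quad\text{in }\Omega,\qquad (\nabla p+uV)\cdot\nu=0\text{ on }\Gamma_N.
\end{equation}
Solvability follows from a regularization argument: I replace $\sign$ by its Yosida approximation $\sign_\delta$, obtain $p_\delta\in H^1_D(\Omega)$ via Minty/Leray--Schauder on the coercive nonlinear elliptic problem (the bound $\nabla\cdot V\in L^\infty$ absorbs the drift), and pass to the limit $\delta\to 0$ using demiclosedness of the maximal monotone graph $\sign$.

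Next I transpose the analysis of Section~2 to the stationary level to obtain an $L^1$--contraction for the scheme. The counterparts of the renormalized formulations of Proposition~\ref{prenormal} and of the local Kato inequality of Proposition~\ref{PKato} become zero--order (the time derivative is absent), but the doubling/de--doubling argument goes through verbatim; combined with the cut-off $\xi_h$ and the outward--pointing condition \eqref{HypV} as in Lemmas~\ref{ltechevol1}--\ref{pintegral}, this yields $\|u_1-u_2\|_1\leq\|\phi_1-\phi_2\|_1$ for any two stationary solutions coming from data $\phi_1,\phi_2$. Consequently, the operator $A$ on $L^1(\Omega)$ sending $\phi\mapsto \phi-u(\phi)$ is $m$--accretive with range condition $\overline{R(I+\eps A)}\supseteq L^1(\Omega)$ for every $\eps>0$.

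I then iterate the scheme: for a uniform partition of mesh $\eps=T/n$ with $f^k=\eps^{-1}\!\int_{t_{k-1}}^{t_k}\!f(s)\,ds$, set $u^0=u_0$ and $u^k=(I+\eps A)^{-1}(u^{k-1}+\eps f^k)$. Let $u_\eps$ and $p_\eps$ denote the resulting piecewise--constant interpolants. The Crandall--Liggett generation theorem together with the $L^1$--contraction yield convergence $u_\eps\to u$ in $C([0,T];L^1(\Omega))$ with $u(0)=u_0$ and $|u|\leq 1$. Testing each stationary equation by $p^k$ and summing over $k$ provides an energy estimate $\|\nabla p_\eps\|_{L^2(Q)}\leq C(u_0,f,V)$, so, along a subsequence, $p_\eps\rightharpoonup p$ weakly in $L^2(0,T;H^1_D(\Omega))$.

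It remains to verify that the pair $(u,p)$ fulfils Definition~\ref{defws}. The linear terms in the discrete weak formulation pass to the limit directly from the compactness above. The main obstacle is preserving the inclusion $u\in\sign(p)$ in the limit, since $p_\eps$ is only known to converge weakly. I would resolve this by combining the strong $L^1$--convergence $u_\eps\to u$ (supplied by the $L^1$--contraction and the uniform $L^\infty$ bound $|u_\eps|\leq 1$) with the Minty trick applied to the maximal monotone graph $\sign$: testing $\iint_Q(u_\eps-\eta)(p_\eps-\zeta)\geq 0$ for arbitrary $\eta\in\sign(\zeta)$, passing to the limit, and then letting $\zeta$ approach $p$ identifies $u\in\sign(p)$ a.e.\ in $Q$. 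Uniqueness, continuity in $L^1$ and the remaining quantitative properties are then inherited from Theorem~\ref{compcmef}.
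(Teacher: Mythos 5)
Your proposal is correct and follows the same overall architecture as the paper: implicit Euler discretization, $L^1$-accretivity of the stationary operator, Crandall--Liggett generation of the mild solution, a discrete energy estimate giving weak $L^2(0,T;H^1_D(\Omega))$ compactness of $p_\eps$, and identification of the limit pair as a weak solution. The one genuine divergence is how you obtain the $L^1$-contraction for the stationary scheme: you propose to re-run the renormalization and doubling/de-doubling machinery of Section~2 in a zero-order (stationary) version, whereas the paper (Corollary~\ref{ccontractionst}) gets this for free from Theorem~\ref{compcmef} by the observation that a stationary solution $(u,p)$ with datum $f$ is a time-independent weak solution of the evolution problem with source $f-u$, so the evolution comparison principle immediately yields $\int(u_1-u_2)^+\leq\int(f_1-f_2)^+$. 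Your route would work but duplicates a substantial amount of analysis that the paper's one-line reduction avoids. Two small points to adjust: the stationary problem must be solved for arbitrary data in $L^2(\Omega)$, not only for $\phi$ with $\vert\phi\vert\leq1$, since the iterates receive the datum $u^{k-1}+\eps f^k$, which need not lie in $[-1,1]$ --- the coercivity/regularization argument you sketch handles this without change, so state it at that generality; and your Minty argument for preserving $u\in\sign(p)$ in the limit is a welcome elaboration of a step the paper leaves largely implicit (the strong $C([0,T);L^1(\Omega))$ convergence and uniform $L^\infty$ bound on $u_\eps$ against the weak $L^2(0,T;H^1_D(\Omega))$ convergence of $p_\eps$ do suffice to pass the product to the limit).
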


 Thanks to   Theorem \ref{existcmef} and  Theorem \ref{compcmef}, we have the following practical  particular results.  
 \begin{corollary}
 	Let $f\in L^2(Q),$   $u_0\in L^\infty(\Omega)$ be such that $\vert u_0\vert \leq 1$ a.e. in $\Omega,$ and $u$ be   the unique weak solution of the problem \eqref{cmef}.  
 	
 	\begin{enumerate}
 		\item  If  $u_0\geq 0$ and $f\geq 0,$ then $(u,p)$ is the unique  weak solution of the one phase problem
 		\begin{equation} 			\left\{  \begin{array}{ll}\left.
 				\begin{array}{l}
 					\displaystyle \frac{\partial u }{\partial t}  -\Delta p +\nabla \cdot (u  \: V)= f \\
 					\displaystyle  0\leq u\leq 1,\:  p\geq 0,\:  p(u-1)=0 \end{array}\right\}
 				\quad  & \hbox{ in } Q \\  
 				\displaystyle p= 0  & \hbox{ on }\Sigma_D \\  
 				\displaystyle (\nabla p- u  \: V)\cdot \nu = 0  & \hbox{ on }\Sigma_N  \\ 
 				\displaystyle  u (0)=u _0 &\hbox{ in }\Omega,\end{array} \right. 
 		\end{equation}
 		in the sense of Definition \ref{defws}.

 		\item For each $n=1,2,....,$ let  $f_n\in L^1(Q),$   $u_{0n}\in L^\infty(\Omega)$ be such that $\vert u_{0n}\vert \leq 1$ a.e. in $\Omega,$ and $u_n$   be  the unique weak solution of the corresponding problem \eqref{cmef}. If $u_{0n}\to u_0$ in $L^1(\Omega)$ and $f_n\to f$ in $L^1(Q),$ then $	u_n \to u,$  in $ \C([0,T),L^1(\Omega))$,    $	p_n \to p, $ in $ L^2(0,T;H^1_D(\Omega))-\hbox{weak},$ and $(u,p)$ is the solution corresponding to $u_0$ and $f.$

 	\end{enumerate} 
 \end{corollary}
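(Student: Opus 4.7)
For part (1), the plan is to exploit the trivial weak solution $(u_1, p_1) \equiv (0, 0)$ of \eqref{cmef} (corresponding to $u_{0,1} \equiv 0$, $f_1 \equiv 0$; the inclusion $0 \in \sign(0) = [-1, 1]$ is automatic and both sides of \eqref{evolwf} vanish). Comparing the candidate solution $(u, p)$ with $(0, 0)$ via Theorem \ref{compcmef}, whose hypotheses $f_1 \leq f_2$ and $u_1(0) \leq u_2(0)$ are exactly our standing assumptions $0 \leq f$ and $0 \leq u_0$, yields $0 \leq u$ a.e.\ in $Q$. The three one-phase constraints then follow mechanically from $u \in \sign(p)$: the bound $u \leq 1$ is automatic, $p \geq 0$ a.e.\ because $p < 0$ would force $u = -1$, and $p(u-1) = 0$ because $p > 0$ forces $u = 1$. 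Conversely, the one-phase conditions imply $u \in \sign(p)$ (check the cases $p > 0$ and $p = 0$ separately), so the two notions of weak solution coincide and uniqueness transfers from Theorem \ref{existcmef}.

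For part (2), the $L^1$-contraction in Theorem \ref{compcmef} gives, after integrating in time,
$$ \|u_n(t) - u_m(t)\|_1 \leq \|u_{0n} - u_{0m}\|_1 + \|f_n - f_m\|_{L^1(Q)}, $$
so $(u_n)$ is Cauchy in $\C([0,T); L^1(\Omega))$ and converges to some $u^\star$ with $u^\star(0) = u_0$ and $|u^\star| \leq 1$. For the weak convergence of $p_n$, the plan is first to establish a uniform bound on $p_n$ in $L^2(0, T; H^1_D(\Omega))$ via the classical energy estimate: formally testing \eqref{evolwf} with $p_n$ itself and exploiting $u_n \in \sign(p_n)$, equivalently $p_n \in \partial I_{[-1,1]}(u_n)$, the time-derivative term vanishes (since $I_{[-1,1]}(u_n) \equiv 0$), and Young--Poincar\'e deliver $\int\!\!\!\int_Q |\nabla p_n|^2 \leq C(\|V\|_\infty, \|f_n\|_{L^2(Q)}, T)$. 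Extracting a subsequence with $p_n \rightharpoonup p^\star$ weakly in $L^2(0, T; H^1_D(\Omega))$, I would then pass to the limit in \eqref{evolwf}: strong $L^q$-convergence of $u_n$ for every $q < \infty$ (from $\|u_n\|_\infty \leq 1$ plus strong $L^1$-convergence) handles the $u_n V \cdot \nabla \xi$ term, while weak convergence of $\nabla p_n$ in $L^2$ and $L^1$-convergence of $f_n$ dispose of the rest.

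The delicate step is recovering the graph inclusion $u^\star \in \sign(p^\star)$ a.e. For this the plan is a Minty-type argument: for any measurable pair $(w, q)$ with $w \in \sign(q)$ a.e., pointwise monotonicity of the graph gives $(u_n - w)(p_n - q) \geq 0$; testing against any $0 \leq \phi \in L^\infty(Q)$ and passing to the limit via strong-weak duality in $L^2 \times L^2$ (note $u_n \to u^\star$ strongly in $L^2$ thanks to $|u_n| \leq 1$) yields $\int\!\!\!\int_Q \phi (u^\star - w)(p^\star - q) \geq 0$ for every admissible $(w, q)$, and maximal monotonicity of $\sign$ closes the argument. Uniqueness from Theorem \ref{compcmef} then identifies $(u^\star, p^\star) = (u, p)$ and promotes subsequential weak convergence to convergence of the full sequence (any two weak limit points of $p_n$ must coincide by the equation, since they share the same $u^\star$ and $f$).

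The main obstacle is the uniformity of the $L^2(H^1_D)$ bound on $p_n$: since $f_n \to f$ is assumed only in $L^1(Q)$, the right-hand side of the energy estimate need not be uniformly controlled. The clean proof therefore either supplements the hypothesis with a uniform $L^2(Q)$-bound on $f_n$ (implicitly suggested by the statement, which already requires each $f_n \in L^2(Q)$, and typically available in applications) or invokes a Boccardo--Gallou\"{e}t-style truncation estimate exploiting $u_n \in \sign(p_n)$ to control $\nabla T_k(p_n)$ in $L^2$ uniformly, followed by a nonlinear compactness argument to recover the graph inclusion in the limit.
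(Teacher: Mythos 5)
The paper offers no written proof of this corollary: it is stated as an immediate consequence of Theorem \ref{compcmef} and Theorem \ref{existcmef}, so your reconstruction must be judged against that intended argument. Your part (1) is correct and is exactly the intended route: $(0,0)$ is a weak solution of \eqref{cmef} with data $f\equiv 0$, $u_0\equiv 0$, the comparison part of Theorem \ref{compcmef} gives $u\geq 0$ a.e.\ in $Q$, and the pointwise equivalence between $\{u\in\sign(p),\ u\geq 0\}$ and the one-phase constraints $\{0\leq u\leq 1,\ p\geq 0,\ p(u-1)=0\}$ transfers existence and uniqueness between the two formulations. Nothing to object to there.

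For part (2), your outline (contraction for $u_n$, energy estimate for $p_n$, strong--weak closedness of the maximal monotone graph) is the natural reading of what the authors intend, and the obstacle you flag at the end is not a technicality you failed to overcome: it is a genuine defect of the statement as written. With only $f_n\to f$ in $L^1(Q)$, the conclusion $p_n\rightharpoonup p$ in $L^2(0,T;H^1_D(\Omega))$ is actually false. Take $\Gamma_D=\partial\Omega$, $V\equiv 0$, $u_0\equiv 1$, $f\equiv 0$, and $f_n=n^2\,\chi_{(0,n^{-3})}\,\chi_B$ for a fixed ball $B\subset\Omega$; then $f_n\in L^2(Q)$, $f_n\to 0$ in $L^1(Q)$, and the unique weak solution with data $(u_0,f_n)$ is $u_n\equiv 1$ together with $p_n(t)$ solving $-\Delta p_n(t)=f_n(t)$, $p_n(t)\in H^1_0(\Omega)$ (indeed $p_n\geq 0$, so $1\in\sign(p_n)$, and \eqref{evolwf} holds since $\partial_t u_n=0$). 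A direct computation gives $\int_0^T\Vert\nabla p_n(t)\Vert_{L^2(\Omega)}^2\,dt=c_B\,n^4\cdot n^{-3}=c_B\,n\to\infty$, so $p_n$ is unbounded in $L^2(0,T;H^1_D(\Omega))$ and cannot converge weakly, whereas the limit problem has $p\equiv 0$. Hence the corollary requires a strengthened hypothesis, e.g.\ $f_n\to f$ in $L^2(Q)$, or $f_n$ bounded in $L^2(Q)$ together with $f_n\to f$ in $L^1(Q)$; this is the first of your two proposed fixes, and under it your argument closes. Your second fix (Boccardo--Gallou\"et truncation) cannot rescue the stated conclusion: truncation estimates only yield bounds in $W^{1,q}$, $q<N/(N-1)$, and no argument can produce weak $L^2(H^1_D)$ compactness, since the counterexample shows it simply fails. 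Three smaller remarks under the corrected hypothesis: the uniform bound on $p_n$ is best obtained from the discrete-level estimate \eqref{hestimatei} in Lemma \ref{lmildweak} plus weak lower semicontinuity, rather than by formally testing \eqref{evolwf} with $p_n$ (which needs justification because $\partial_t u_n$ is only a distribution); the identification of the limit of $u_n$ is immediate by applying the contraction of Theorem \ref{compcmef} directly against the limit solution $u$ (which exists by Theorem \ref{existcmef} since $f\in L^2(Q)$), avoiding your Cauchy-plus-identification step; and your Minty argument, while correct, can be bypassed, since once $u$ is known, any weak limit $p^\star$ of a subsequence of $p_n$ satisfies \eqref{evolwf} with the same $u$ and $f$ as $p$ does, whence $\nabla(p^\star-p)=0$ a.e.\ and $p^\star=p$ by the Dirichlet condition on $\Gamma_D$ with $\mathcal L^{N-1}(\Gamma_D)>0$.
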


To study  the existence, we  process  in the framework of nonlinear semigroup theory in $L^1(\Omega).$ In connection with the Euler implicit discretization scheme of the evolution problem \eqref{cmef}, we consider  the  following  stationary problem   :
 \begin{equation}	\label{st}
 	\left\{  \begin{array}{ll}\left.
 		\begin{array}{l}
 			\displaystyle u - \Delta p +  \nabla \cdot (u  \: V)=f \\
 			\displaystyle u \in \hbox{Sign}(p)\end{array}\right\}
 		\quad  & \hbox{ in }  \Omega\\  \\
 		\displaystyle p= 0  & \hbox{ on }  \Gamma_D\\  \\
 		\displaystyle (\nabla p- u  \: V)\cdot \eta = 0  & \hbox{ on }  \Gamma_N ,\end{array} \right.
 \end{equation}
 where $\displaystyle f \in L^2(\Omega)$ and $\lambda >0$ are given.  A couple $(u ,p) \in  L^\infty(\Omega) \times H^1_D(\Omega)$  satisfying   $\displaystyle    u \in \sign(p)$ a.e. in $\Omega,$ 
 is said to be a weak solution  of \eqref{st}  if
 \begin{equation}\label{stws}
 	\displaystyle  \int_\Omega u \:\xi+  \int_\Omega  \nabla p \cdot  \nabla\xi  -
 	\int_\Omega  u \:  V\cdot \nabla  \xi    =     \int_\Omega f\: \xi,\quad \forall\: \xi\in H^1_D(\Omega).
 \end{equation}
  As for the evolution problem, we'll say simply that $u$ is a solution of \eqref{st} if there exists $p$ such that the couple $(u,p)$ is a weak solution of   \eqref{st}.

 \bigskip
 As a consequence of  Theorem \ref{compcmef}, we can deduce the following result.

 %Corollary
 \begin{corollary}\label{ccontractionst}
 	If  	$u _1$ and $u _2$ are   two   solutions of  \eqref{st} associated with $f_1,\: f_2 \in L^1(\Omega),$  respectively, then
 	\begin{equation}
 		\label{ineqcomp}
 		\int( u _1-u _2)^+ \leq \int ( f_1-f_2)^+ .
 	\end{equation}
 	In particular, we have
 	$$ \Vert u_1-u_2\Vert_{1} \leq \Vert f_1-f_2\Vert _{1}$$
 	and, if  $f_1\leq f_2,$  a.e. in  $\Omega,$   then
 	$$u_1\leq u_2,\quad \hbox{ a.e. in  }\Omega.$$
 \end{corollary}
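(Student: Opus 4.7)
The plan is to derive the stationary $L^1$-contraction as a direct consequence of the evolution comparison principle in Theorem \ref{compcmef}, by lifting stationary solutions to constant-in-time solutions of the evolution problem. Concretely, given a weak solution $(u_i,p_i)\in L^\infty(\Omega)\times H^1_D(\Omega)$ of \eqref{st} associated with $f_i$, set $\tilde u_i(t,x):=u_i(x)$ and $\tilde p_i(t,x):=p_i(x)$ on $Q$. These extensions clearly lie in $L^\infty(Q)\times L^2(0,T;H^1_D(\Omega))$, satisfy $\tilde u_i\in \sign(\tilde p_i)$ a.e.\ in $Q$, and $|u_i|\le 1$ since $u_i\in\sign(p_i)$. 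Comparing the stationary formulation \eqref{stws} with \eqref{evolwf} one sees that $(\tilde u_i,\tilde p_i)$ is a weak solution of \eqref{cmef} in the sense of Definition \ref{defws}, with initial datum $u_i$ and source
\begin{equation}
\tilde f_i := f_i - u_i \in L^1(Q),
\end{equation}
since the missing $\partial_t\tilde u_i\equiv 0$ is precisely compensated by the zero-order term $u_i$ appearing in \eqref{stws}.

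Next I apply Theorem \ref{compcmef} to the pair $(\tilde u_1,\tilde p_1)$ and $(\tilde u_2,\tilde p_2)$. It furnishes $\kappa\in L^\infty(Q)$ with $\kappa\in\signp(u_1-u_2)$ a.e.\ in $Q$ such that
\begin{equation}
\frac{d}{dt}\int_\Omega (u_1-u_2)^+\,dx \;\le\; \int_\Omega \kappa\,(\tilde f_1-\tilde f_2)\,dx \qquad \text{in } \D'(0,T).
\end{equation}
Because $u_1,u_2$ are time-independent, the left-hand side vanishes identically as a distribution. Substituting $\tilde f_i=f_i-u_i$ and rearranging,
\begin{equation}
\int_\Omega \kappa\,(u_1-u_2)\,dx \;\le\; \int_\Omega \kappa\,(f_1-f_2)\,dx.
\end{equation}
Since $\kappa\in\signp(u_1-u_2)$, the integrand on the left equals $(u_1-u_2)^+$ a.e., and since $0\le\kappa\le 1$, the right-hand side is bounded by $\int_\Omega (f_1-f_2)^+\,dx$. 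This is exactly \eqref{ineqcomp}.

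The remaining assertions are immediate: the $L^1$-contraction $\|u_1-u_2\|_1\le\|f_1-f_2\|_1$ follows by adding \eqref{ineqcomp} to its counterpart obtained by swapping the roles of $(u_1,f_1)$ and $(u_2,f_2)$, and the comparison statement follows because $f_1\le f_2$ a.e.\ forces $(f_1-f_2)^+\equiv 0$, hence $(u_1-u_2)^+\equiv 0$. The only step deserving real attention is the first one, namely verifying that the time-independent lift genuinely fits Definition \ref{defws} with the shifted source $f_i-u_i$; once this reduction is made, no further doubling-of-variables or commutator analysis is needed, as Theorem \ref{compcmef} already absorbs all the hard work.
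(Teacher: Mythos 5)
Your proposal is correct and follows exactly the paper's argument: the paper likewise observes that a stationary solution of \eqref{st} is a (time-independent) weak solution of \eqref{cmef} with $f$ replaced by $f-u$, and then invokes Theorem \ref{compcmef}. You have merely written out in detail the verification and the algebra (using $\kappa(u_1-u_2)=(u_1-u_2)^+$ and $0\le\kappa\le1$) that the paper leaves implicit.
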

 \begin{proof}
 	This is a simple consequence of the fact  that if $(u,p)$ (which is independent of $t$) is a solution of \eqref{st}, then it  can be assimilated to a time-independent solution of  the evolution problem \eqref{cmef} with $f$ replaced by $f-u$ (which is also independent of $t$).  \end{proof}

 \bigskip For the existence,    we consider the regularized problem
 \begin{equation}	\label{psteps}
 	\left\{  \begin{array}{ll}\left.
 		\begin{array}{l}
 			\displaystyle u - \Delta p +  \nabla \cdot (u  \: V)=f \\
 			\displaystyle u =\heps  (p) \end{array}\right\}
 		\quad  & \hbox{ in }  \Omega\\  \\
 		\displaystyle p= 0  & \hbox{ on }  \Gamma_D\\  \\
 		\displaystyle (\nabla p- u  \: V)\cdot \eta = 0  & \hbox{ on }  \Gamma_N,\end{array} \right.
 \end{equation}
for an arbitrary $\eps >0.$   See that for any $\eps>0,$   $ \vert \heps\vert \leq 1$, $\heps$ is  Lipschitz continuous    and satisfies
 $$(I+\heps)^{-1} (r)\to  (I+\sign)^{-1} (r), \hbox{ as }\eps\to 0,  \hbox{ for any }r\in  \RR. $$
 That is $\heps $ converges to $\sign$ in the sense  of resolvent, which is equivalent to the convergence in the sense of graph (cf. \cite{Br}). 

 \bigskip
  %Proposition
 \begin{proposition}\label{extsst} 
 	For any $f\in L^2(\Omega)$ and $\eps>0,$ the problem 	\eqref{psteps} has a   weak solution $(u_\eps,p_\eps) $ in the sense that  $p_\eps\in H^1_D(\Omega),$  $  u_\eps=\heps(p_\eps) $   a.e. in $\Omega$     and
 	\begin{equation}
 		\label{weakeps}
 		\int u_\eps \: \xi \: dx + \int \nabla p_\eps \cdot \nabla \xi \: dx  -\int u_\eps \: V\cdot \nabla \xi \: dx =\int f\: \xi\: dx,  \quad \hbox{ for any }\xi\in H^1_D(\Omega).   \end{equation}
  Moreover,    as $\eps\to 0,$ we have 
 	\begin{equation}\label{convuepss}
 		\heps (p_\eps) \to u \quad \hbox{ in }  L^\infty(\Omega)-\hbox{weak}^*, \end{equation}
 	 	\begin{equation}\label{convpepss} p_\eps \to p\quad \hbox{ in }  H^1_D(\Omega)-\hbox{weak}  \end{equation}
 	and  $(u,p)$ is the weak  solution of  	\eqref{st}. 
\end{proposition}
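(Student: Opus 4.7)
The plan is to treat the regularized problem \eqref{psteps} as a monotone elliptic equation in $p_\eps$ alone, derive $\eps$-uniform $H^1_D$ estimates, extract a convergent subsequence, and then identify the limit using the convergence of $\heps$ to $\sign$ as maximal monotone graphs.

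For the existence of $(u_\eps,p_\eps)$ at fixed $\eps>0$, I would substitute $u_\eps=\heps(p_\eps)$ directly into \eqref{weakeps} and view it as a single nonlinear elliptic problem for $p_\eps\in H^1_D(\Omega)$ with operator $A_\eps(p):=-\Delta p+\nabla\cdot(\heps(p)V)+\heps(p)$. Since $\heps$ is Lipschitz, monotone and bounded by $1$, a Schauder fixed-point argument (freezing $\heps(\bar p)$ in the transport term, solving the resulting monotone coercive linear equation for $p$, and using the compact embedding $H^1_D\hookrightarrow L^2$) provides a solution. The coercivity comes from testing with $p$ and using the boundary conditions $p=0$ on $\Gamma_D$ and $V\cdot\nu=0$ on $\Gamma_N$ to integrate the convection term by parts as
\begin{equation*}
\int_\Omega \bigl(-\nabla\cdot(\heps(p)V)\bigr)\, p\, dx = \int_\Omega \heps(p)\, V\cdot\nabla p\, dx = -\int_\Omega (\nabla\cdot V)\,\Phi_\eps(p)\, dx,
\end{equation*}
where $\Phi_\eps(r):=\int_0^r \heps(s)\,ds\geq 0$. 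Combined with $\heps(p)p\geq 0$, $\Phi_\eps(p)\leq |p|$, $\nabla\cdot V\in L^\infty$, and Poincaré's inequality (available because $\mathcal L^{N-1}(\Gamma_D)>0$), this yields $\langle A_\eps(p),p\rangle\geq c\|p\|_{H^1}^2-C\|p\|_{H^1}$, which suffices.

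Taking $\xi=p_\eps$ in \eqref{weakeps} and repeating the identity above produces the $\eps$-uniform bound $\|p_\eps\|_{H^1_D}\leq C(\|f\|_2,\|\nabla\cdot V\|_\infty)$, together with the trivial $\|u_\eps\|_\infty\leq 1$. Extracting subsequences via reflexivity and Rellich's theorem, we obtain $p_\eps\rightharpoonup p$ in $H^1_D(\Omega)$, $p_\eps\to p$ strongly in $L^2(\Omega)$ and a.e.\ in $\Omega$, and $u_\eps\rightharpoonup^* u$ in $L^\infty(\Omega)$ with $|u|\leq 1$. This already gives \eqref{convpepss} together with the weak-$*$ half of \eqref{convuepss}.

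The remaining and most delicate step is identifying $u\in\sign(p)$ a.e.\ and passing to the limit in the weak form. On $\{p\ne 0\}$ the a.e.\ convergence $p_\eps\to p$ and the explicit form of $\heps$ (e.g.\ $\heps(r)=\min(\max(r/\eps,-1),1)$) force $\heps(p_\eps)\to \sign_0(p)\in\{-1,+1\}$ pointwise, and combining this with the weak-$*$ limit yields $u=\sign_0(p)$ there; on $\{p=0\}$ the bound $|u|\leq 1$ automatically places $u$ in $\sign(0)=[-1,1]$. Passing to the limit in each term of \eqref{weakeps} is then routine: the diffusion term uses weak $H^1_D$ convergence of $p_\eps$, while the zero-order and transport terms use the $L^\infty$-weak-$*$ convergence of $u_\eps$ tested against $\xi,\,V\cdot\nabla\xi\in L^1(\Omega)$ (integrability that is in any case needed for the original weak formulation to make sense). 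This proves $(u,p)$ is a weak solution of \eqref{st}, i.e.\ \eqref{stws}, and the uniqueness furnished by Corollary \ref{ccontractionst} then upgrades the subsequence convergence to full-family convergence of $(u_\eps,p_\eps)$.
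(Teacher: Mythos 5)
Your proposal is correct and follows the same skeleton as the paper's proof: solve the regularized problem by coercive elliptic theory, get the $\eps$-uniform $H^1_D$ bound by testing with $p_\eps$, extract weakly convergent subsequences, and identify the limit. Two execution details differ. For the fixed-$\eps$ existence the paper does not freeze the transport term and run Schauder; it applies surjectivity of the bounded, weakly continuous, coercive operator $A_\eps$ directly, and its coercivity estimate is simpler than yours: it bounds $\left|\int \heps(p)\, V\cdot\nabla p\right|\leq \int |V|\,|\nabla p|$ by Young's inequality, with no integration by parts and no use of the boundary conditions, whereas your route through $\Phi_\eps(r)=\int_0^r\heps$ needs the trace of $\Phi_\eps(p)\,V\cdot\nu$ to vanish (fine, but unnecessary; also, freezing $\heps(\bar p)$ only in the transport term leaves a nonlinear, albeit monotone, zero-order term, so ``linear'' is a slight misnomer). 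For the identification of $u\in\sign(p)$, the paper is terse (it invokes the resolvent/graph convergence of $\heps$ to $\sign$ together with the strong $L^2$ and a.e.\ convergence of $p_\eps$), while your explicit pointwise argument on $\{p\neq 0\}$ versus $\{p=0\}$ is a legitimate and arguably more transparent way to reach the same conclusion; your final remark that uniqueness upgrades subsequence convergence to full-family convergence is also consistent with the paper.
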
 
\begin{proof}   
 The  existence of a solution for \eqref{psteps}   is standard. For completeness we give the arguments. Let us denote by $H^{1}_D(\Omega)^* $ the topological dual space of $H^{1}_D(\Omega)$ and $ \langle .,. \rangle$ the associate  dual bracket.    See that the operator $A_\eps\: :\:  H^{1}_D(\Omega)\to  H^{1}_D(\Omega)^*,$ given by
 	$$ \langle A_\eps p,\xi\rangle =  \int\heps (p)\: \xi\: dx + \int \nabla p\cdot \nabla \xi \: dx -\int\heps (p) \: V\cdot \nabla \xi \: dx, \quad \forall\: \xi\in H^1_D(\Omega), $$
 	is  bounded and weakly continuous.  Moreover,  $A_\eps$ is coercive. Indeed, for any $u\in H^{1}_D(\Omega),$ we have
 	\begin{eqnarray}
 		\langle A_\eps p,p\rangle &=& \int\heps (p)\: p \: dx + \int \vert \nabla p\vert^2  \: dx -\int\heps (p) \: V\cdot \nabla p  \: dx \\  \\
 		&\geq&    \int \vert \nabla p\vert^2  \: dx -\int \vert V\cdot \nabla p \vert  \: dx \\
 		&\geq&  \frac{1}{2}  \int \vert \nabla p\vert^2  \: dx - \frac{1}{2}\int  \vert  V\vert^2   \: dx,
 	\end{eqnarray}
 	where we use Young inequality.   So, for any $f\in  H^{1}_D(\Omega)^* $ the problem $A_\eps p=f$ has a solution $p_\eps \in H^1_D(\Omega).$  To let $\eps\to 0,$ we see    that 
 	 		\begin{equation} 		\label{hestimate0}
 		\int \vert \nabla p_\eps \vert^2\: dx  \leq  C (N,\Omega) \left(  	\int \vert V  \vert^2\: dx   + 	\int \vert f\vert^2\: dx \right ).
 	\end{equation}
Indeed, taking $ p_\eps $ as a test function we have
 	\begin{equation} \label{testphi}
 		\int u_\eps\:  p_\eps\: dx + \int \vert \nabla  p_\eps\vert^2\: dx =  \int u_\eps\: V\cdot \nabla  p_\eps  \: dx + \int f\:  p_\eps\: dx.
 	\end{equation}
 	See  that using    Young inequality we have
 	$$	\int u_\eps\: V\cdot \nabla  p_\eps \: dx \leq   \frac{3}{4}  \int  \vert V\vert ^2  + \frac{1}{3} \int \vert \nabla p_\eps\vert^2\: dx  $$
 	and, by combining Pincar\'e  inequality with  Young inequality we have
 	$$\int f\:  p_\eps\: dx \leq  C(N,\Omega)  \int  \vert f \vert ^2  + \frac{1}{3} \int \vert  \nabla p_\eps\vert^2\: dx   .$$
 	Using the fact that  $u_\eps\: p_\eps\geq 0,$  we deduce    \eqref{hestimate0}.  Now, it is clear that 
 the  sequences  $p_\eps$ and $u_\eps=\heps (p_\eps)$  are bounded respectively in $H^1_D(\Omega) $ and in  $L^\infty(\Omega)$.  So, there exists a subsequence that we denote again by $p_\eps$ such that \eqref{convuepss} and \eqref{convpeps} are fulfilled.  	In particular, using monotonicity argument (see for instance \cite{Br})  this implies that $u\in \sign(p),$ a.e. and letting $\eps\to 0$ in \eqref{weakeps}, we obtain that $(u,p)$ is a weak solution of \eqref{st}. 
 \end{proof}

 To prove the existence, of a weak solution to \eqref{cmef},  we fix $f\in L^2(Q),$ and for an arbitrary $0<\eps\leq \eps_0,$ and $n\in \NN^*$ be such that  $n\eps=T,$  we consider the sequence of $(u_i,p_i)$ given by the the $\eps-$Euler implicit scheme associated with \eqref{cmef} :
 	\begin{equation}	\label{sti}
 	\left\{  \begin{array}{ll}\left.
 		\begin{array}{l}
 			\displaystyle u_{i+1} - \eps \: \Delta p_{i+1}  +  \eps \: \nabla \cdot (u_{i+1}   \: V)=u_{i} +\eps  \: f_i  \\
 			\displaystyle u_{i+1}  \in \hbox{Sign}(p_{i+1} )\end{array}\right\}
 		\quad  & \hbox{ in }  \Omega\\  \\
 		\displaystyle p_{i+1} = 0  & \hbox{ on }  \Gamma_D\\  \\
 		\displaystyle (\nabla p_{i+1} - u_{i+1}   \: V)\cdot \eta = 0  & \hbox{ on }  \Gamma_N ,\end{array} \right. \quad i=0,1, ... n-1,
 \end{equation}
 where,   for each $i=0,...n-1,$ $f_i$ is given by
 $$f_i = \frac{1}{\eps } \int_{i\eps}^{(i+1)\eps }  f(s)\: ds ,\quad \hbox{ a.e. in }\Omega.  $$
 Now, for a given $\eps-$time discretization $0=t_0<t_1<t_1<...<t_n= T,$ satisfying
 $t_{i+1}-t_i = \eps,  $  we define the $\eps-$approximate solution by
 \begin{equation}
	u_\eps:=  \sum_{i=0} ^{n-1 }  u_i\chi_{[t_i,t_{i+1})},\quad \hbox{ and    }  \quad 	p_\eps:=   \sum_{i=1} ^{n -1}  p_i\chi_{[t_i,t_{i+1})}.
\end{equation}

Thanks to Proposition \ref{extsst} and  the general theory of evolution problem governed by accretive operator (see for instance \cite{Benilan,Barbu}), we define the operator $\A$ in $L^1(\Omega),$ by $\mu\in \A(z)$ if and only if $\mu,\: z\in L^1(\Omega)$ and $z$ is a solution of the problem 

  \begin{equation}	\label{eqop}
 	\left\{  \begin{array}{ll}\left.
 		\begin{array}{l}
 			\displaystyle  - \Delta p +  \nabla \cdot (z  \: V)=\mu\\
 			\displaystyle z \in \hbox{Sign}(p)\end{array}\right\}
 		\quad  & \hbox{ in }  \Omega\\  \\
 		\displaystyle p= 0  & \hbox{ on }  \Gamma_D\\  \\
 		\displaystyle (\nabla p- u  \: V)\cdot \eta = 0  & \hbox{ on }  \Gamma_N ,\end{array} \right.
 \end{equation}
  in the sense that $z\in L^\infty(\Omega),$ there exists  $p \in    H^1_D(\Omega)$  satisfying   $\displaystyle    z \in \sign(p)$ a.e. in $\Omega$ 
and 
 \begin{equation} 
 	\displaystyle  \int_\Omega  \nabla p \cdot  \nabla\xi  -
 	\int_\Omega  z \:  V\cdot \nabla  \xi    =     \int_\Omega \mu \: \xi, \quad \forall \:  \xi\in H^1_D(\Omega).
 \end{equation}  
 
 As a consequence of Corollary \ref{ccontractionst}, we know that  the  operator $\A$ is accretive in $L^1(\Omega)$. Moreover, it is not difficult to see that $\overline{\D(A)}=\{ u\in L^\infty(\Omega)\:  :\:  \vert u\vert \leq 1\hbox{ a.e. in }\Omega \}.$ So,  thanks to the general   theory of nonlinear semigroup governed by accretive  operator (see for instance \cite{Barbu}),  we know that, as $\eps\to 0,$  
 		\begin{equation}\label{convueps}
 			u_\eps \to u,\quad \hbox{ in } \C([0,T),L^1(\Omega),
 		\end{equation}
 		and  $u$ is the so called ''mild solution'' of the evolution problem
 		\begin{equation}\label{cauchy}
 			\left\{\begin{array}{ll}
 				u_t + \A u\ni f \quad & \hbox{ in }(0,T)\\  \\
 				u(0)=u_0.
 			\end{array}  \right.
 		\end{equation}

To accomplish the proof of existence  for the problem  \eqref{cmef}, we prove that the mild solution $u$   is in fact the solution of \eqref{cmef}. To this aim, we use   the limit of the sequence $p_\eps$ given by the $\eps-$approximate solution. 
%Lemma
\begin{lemma}\label{lmildweak}
	As $\eps\to 0,$
	\begin{equation}\label{convpeps}
		p_\eps \to p,\quad \hbox{ in } L^2(0,T; H^1_D(\Omega)) 
	\end{equation}
and  $(u,p)$ is  a weak solution of \eqref{cmef}. 
\end{lemma}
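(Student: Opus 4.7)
The plan is to derive a uniform $L^2(0,T;H^1_D(\Omega))$ bound on the pressures $p_\eps$, extract a weak limit $p$, and identify the pair $(u,p)$ as a weak solution of \eqref{cmef} by passing to the limit in the discrete weak formulation of \eqref{sti}. Uniqueness from Theorem \ref{compcmef} will then promote subsequential convergence to convergence of the whole family.

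First I would test the $i$-th stationary step in \eqref{sti} by $p_{i+1}$. Because $u_{i+1}\in\sign(p_{i+1})$ gives $u_{i+1}\, p_{i+1}=|p_{i+1}|$, and because $|u_i|\le 1$ yields $\int_\Omega u_i\, p_{i+1}\le\int_\Omega |p_{i+1}|=\int_\Omega u_{i+1}\, p_{i+1}$, these two reaction-type terms cancel and what remains (after dividing out $\eps$) is
\begin{equation}
\int_\Omega|\nabla p_{i+1}|^2\, dx \le \int_\Omega u_{i+1}\, V\cdot\nabla p_{i+1}\, dx + \int_\Omega f_i\, p_{i+1}\, dx.
\end{equation}
Young's and Poincar\'e's inequalities then yield $\|\nabla p_{i+1}\|_{L^2}^2\le C(\|V\|_{L^2}^2+\|f_i\|_{L^2}^2)$; multiplying by $\eps$ and summing in $i$ gives $\int_0^T\!\int_\Omega|\nabla p_\eps|^2\, dtdx\le C(\|V\|_{L^2}^2+\|f\|_{L^2(Q)}^2)$ uniformly in $\eps$, so that $p_\eps\rightharpoonup p$ weakly in $L^2(0,T;H^1_D(\Omega))$ along a subsequence.

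Next, Abel summation of the scheme \eqref{sti} yields, for every $\xi\in H^1_D(\Omega)$ and $\psi\in\D(0,T)$, an identity of the form
\begin{equation}
-\int_0^T\!\!\int_\Omega u_\eps\,\xi\,\dot\psi\, dtdx + \int_0^T\!\!\int_\Omega(\nabla p_\eps - u_\eps\, V)\cdot\nabla\xi\,\psi\, dtdx = \int_0^T\!\!\int_\Omega f_\eps\,\xi\,\psi\, dtdx + r_\eps,
\end{equation}
with remainder $r_\eps\to 0$ and $f_\eps\to f$ in $L^2(Q)$. The already-known strong convergence $u_\eps\to u$ in $\C([0,T);L^1(\Omega))$, together with $|u_\eps|\le 1$, upgrades by interpolation to $u_\eps\to u$ in $L^q(Q)$ for every $q<\infty$; this handles the nonlinear drift term, while weak convergence of $\nabla p_\eps$ handles the diffusion. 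The limit identity is precisely \eqref{evolwf} for $(u,p)$.

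The main obstacle is recovering the sign inclusion $u\in\sign(p)$ a.e. in $Q$ from only \emph{weak} convergence of $p_\eps$, for which I would use a Minty-type argument. From $u_\eps\in\sign(p_\eps)$ one has the pointwise identity $u_\eps\, p_\eps=|p_\eps|$ a.e., and the product passes to the limit in integral form: strong convergence of $u_\eps$ in $L^2(Q)$ paired with weak convergence of $p_\eps$ in $L^2(Q)$ gives $\int_Q u_\eps p_\eps\to\int_Q u\, p$, while weak lower semicontinuity of the $L^1$-norm yields $\int_Q|p|\le\liminf\int_Q|p_\eps|=\int_Q u\, p$. Combined with $u\, p\le|u|\,|p|\le|p|$ (recalling $|u|\le 1$), equality must hold almost everywhere, hence $u\, p=|p|$ a.e., i.e., $u\in\sign(p)$. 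This makes $(u,p)$ a weak solution of \eqref{cmef}; uniqueness from Theorem \ref{compcmef} then forces convergence of the entire family, not only of a subsequence.
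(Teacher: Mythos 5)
Your proof is correct and follows essentially the same route as the paper: the same energy estimate obtained by testing the scheme with $p_{i+1}$ (the paper invokes $(u_{i+1}-u_i)\,p_{i+1}\geq 0$, which is exactly your cancellation), weak compactness of $p_\eps$ in $L^2(0,T;H^1_D(\Omega))$, and passage to the limit in the discrete weak formulation (the paper uses the affine interpolant $\tilde u_\eps$ where you use Abel summation, a cosmetic difference). Your Minty-type argument combining $\int_Q u_\eps p_\eps\to\int_Q u\,p$ with weak lower semicontinuity of $\int|\cdot|$ to recover $u\in\sign(p)$ correctly spells out a step the paper merely asserts.
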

\begin{proof}
	Thanks to Proposition \ref{hestimatei}, the  sequence $(u_i,p_i)_{i=1,...n}$ given by  \eqref{sti} is well defined in $L^\infty(\Omega)\times H^1_D(\Omega)$   and satisfies $\displaystyle    u_i \in \sign(p_i)$  and
	\begin{equation}\label{stwsi}
		\displaystyle  \int_\Omega u_{i+1} \:\xi+   \eps\: \int_\Omega  \nabla p_{i+1}  \cdot  \nabla\xi  -  \eps\:
		\int_\Omega  u_{i+1}  \:  V\cdot \nabla  \xi    =   \eps  \int_\Omega f_i\: \xi,\quad \hbox{ for any }\xi\in H^1_D(\Omega) .
	\end{equation}
Taking $p_{i+1}$ as a test function in \eqref{stwsi}, working as for the proof  of \eqref{hestimate0} and using the fact that $(u_{i+1}-u_i)p_{i+1}\geq 0,$ we get 
\begin{equation} 		\label{hestimatei}
	\int \vert \nabla p_i\vert^2\: dx  \leq    C (N,\Omega)   \left(  	\int \vert V  \vert^2\: dx   + 	\int \vert f_i\vert^2\: dx \right ).
\end{equation}
   Thus 
\begin{equation}
	\int \vert \nabla p_\eps \vert^2\: dx  \leq    C (N,\Omega)   \left(  	\int \vert V  \vert^2\: dx   + 	\int \vert f_\eps\vert^2\: dx \right ), 
\end{equation}
where $f_\eps =\sum_{i=0}^{n-1} f_{i}\chi_{[t_i,t_{i+1})},\quad \hbox{ in }Q.$ 
This implies that $p_\eps$ is bounded in $L^\infty(0,T;H^1_D(\Omega)),$   and    that there exists
$p\in L^\infty(0,T;H^1_D(\Omega)) $ such that, taking a subsequence if necessary, 
\begin{equation}
	p_\eps \to p,\quad \hbox{ in } L^2(0,T;H^1_D(\Omega))-\hbox{weak}.
	\end{equation}
Combining this with  \eqref{convueps}, we deduce moreover that $u\in \sign(p),$ a.e. in $Q.$  Now, as usually used with   nonlinear semigroup theory for evolution problem, we consider   
\begin{equation}
  \tilde u_\eps =  \sum_{i=0} ^{n-1}  \frac{(t-t_i)u_{i+1} - (t-t_{i+1}) u_i}{\eps } \chi_{[t_i,t_{i+1})},
\end{equation}
which converges to $u$ as well in $\C([0,T);L^1(\Omega)).$ 
We see that  for any test function  $\displaystyle
\xi\in H^1_D(\Omega),$ we have
\begin{equation}
	\displaystyle \frac{d}{dt}\int_\Omega   \tilde u_\eps  \:\xi+\int_\Omega ( \nabla p_\eps -  u_\eps \:V) \cdot  \nabla\xi   =     \int_\Omega f_\eps\: \xi  , \quad \hbox{ in }{\D}'([0,T)).
\end{equation}
So, letting $\eps\to 0,$ and using the convergence of $(\tilde u_\eps,u_\eps,p_\eps,f_\eps)$ to $(u,u,p,f),$ we deduce that $(u,p)$ is a weak solution of \eqref{cmef}.

	\end{proof}

%Corollary
\begin{corollary}\label{cauchyweak}
	For any $f\in L^2(Q)$ and $u_0\in L^\infty(\Omega)$   such that $\vert u_0\vert \leq 1$ a.e. in $\Omega,$ the mild solution of \eqref{cauchy} is the unique solution of   \eqref{cmef}. 
\end{corollary}

 \section{Extension, comments and remarks}
 
 	\begin{enumerate} 
% 		\item Notice here that in our case, the entropic inequalities \eqref{evolentropic+} and \eqref{evolentropic-}   are local and do not proceed up to the boundary like in \cite{Ca}.   For the case where $\Gamma_N=\emptyset,$ we do believe that one can combine our notion of renormalized solution with   the entropic inequality à la Carillo \cite{Ca} to handle into the entropic formulations  the  boundary condition $p=0$ on $\partial \Omega.$   Even more,   we think also that in this case one can avoid the outgoing   condition of $V$ on the boundary.   Indeed, one sees that  \eqref{renormal+} and   \eqref{renormal-} holds to be true without any condition on $V\cdot \nu  $ on the boundary $\Gamma_D.$  Using this formulation one can prove that weak solutions satisfy entropic inequality à la Carillo covering  the boundary condition $p=0$. Then, using doubling and de-doubling variables in the manner   of  \cite{Ca} one can prove the $L^1-$comparison principle for two weak solutions.  
% 		 	Nevertheless, we assess 	that the problem would  heighten in technicality.  
% 		
% 		    \item[] 		
 		\item 
 		Thanks to the proof of Proposition \ref{prenormal},  one sees that we can  work with $\xi\in H^1_D(\Omega),$ so that  any weak solution $(u,p)$ satisfies 
 		\begin{equation} \label{renormal+b}	
 			\begin{array}{c}
 				\frac{d}{dt}\int \beta (u)\: \xi\: dx   + \int \nabla  p^+ \cdot \nabla  \xi\: dx     +	\langle V\cdot \nabla \beta(u) )  ,  \xi\rangle + \int  (u\:  \nabla \cdot V   ( 	\beta'(u) 	\: \chi_{[p=0]}   +    \spo(p)  )  \:   \xi\: dx      \\  \\
 				\leq  \int  f \:  (\beta'(u) \:  \chi_{[p=0]}  + \spo(p) ) \:   \xi\: dx 
 				\quad \hbox{ in }  \D'(0,T),
 			\end{array}  	
 		\end{equation}
 		and
 		\begin{equation}
 			\label{renormal-b} 
 			\begin{array}{c}
 				\frac{d}{dt}\int \beta (u)\: \xi\: dx  + \int \nabla  p^- \cdot \nabla  \xi\: dx     +	\langle V\cdot \nabla \beta(u) )  ,  \xi\rangle + \int  (u\:  \nabla \cdot V   ( 		\beta'(u)\:
 				\:  \chi_{[p=0]}  -    \smo(p)  )  \:   \xi\: dx      \\  \\
 				\leq  \int  f \:  ( \beta'(u)\:   \chi_{[p=0]}  -  \smo(p)  ) \:   \xi\: dx 
 				\quad \hbox{ in }  \D'(0,T),
 			\end{array}  
 		\end{equation}
 		for any $0\leq \xi\in H^1_D(\Omega)$ and $\beta\in \mathcal C^1(\RR)$ such that $0\leq \beta'\leq 1.$  In particular, for  the case of Neumann boundary condition ; i.e. $\Gamma_D=\emptyset,$ these formulations imply that entropic inequalities  \eqref{evolentropic+} and  \eqref{evolentropic-}  hold to be true up to the boundary $\partial \Omega$. That is \eqref{evolentropic+} and  \eqref{evolentropic-}  are fulfilled in $\D'((0,T)\times \overline \Omega)).$   However, to de-double the variable by taking into account the boundary becomes unhandy since one needs to handle the trace on the boundary  in a strong sense which is not guaranteed in general  (one can see for instance \cite{AnBo} where this kind of approach is developed for elliptic-parabolic equation with homogeneous boundary condition).   By the way, let us mention here that this case  		 is fruitful   for  gradient flow approach in euclidean Wasserstein space. 
 		An    elegant  and promising proof of uniqueness  is given in  \cite{DM} for the  one phase problem under a monotonicity assumption on $V.$  The strength of this approach is the absolute continuity of   weak solution in  the set of probability equipped with Wasserstein distance.

	\item[] 
	\item  For the case where $\Omega=\RR^N,$ one sees that working as in Section 2, it is possible to prove local Kato's inequality of the type  \eqref{ineqkato} and achieve   $L^1-$comparison principle for weak solution like in Theorem \eqref{compcmef}. However, the existence of weak solutions in  $L^1(\RR^N)$ is not clear yet for us.

	 \item[] 
	\item   Beyond their  significant role  for the uniqueness, in our opinion  formulations \eqref{renormal+} and  \eqref{renormal-}  remain  to be   interesting also for qualitative descriptions of the weak solution. As usual for renormalized formulation, they  enable  to localize the description of the solution with respect to its    values.   For similar description  with respect   to the values of $p,$ one can see the following remarks which aims to widen the  formulations   \eqref{eqp+1} and \eqref{eqp-1}.

 \item[] 
 \item The renormalization process we use in this paper concerns essentially  $u.$  The formulation \eqref{eqp+1} and \eqref{eqp-1}  provide  a particular  renormalization  for $p$ in the region $[p\neq 0].$  Notice here, that it is possible to prove mended renormalized  formulations for $p$.   As a matter of fact, under the assumptions of Lemma \ref{leqp+}, we can prove that   for any $H\in \C^1(\RR)$ such that  $H'\geq 0$ and $ H(0)=0$,  we have
 \begin{equation}\label{renorp}
 	-\nabla \cdot(H(p)\nabla p)+  \vert  H(p) \vert     \:  \nabla \cdot V   =    -H'(p)  \: \vert \nabla p\vert^2  +f\: H(p)  \quad \hbox{ in }\D'(Q).
 \end{equation}
 If moreover, $H\geq 0,$ then  
 \begin{equation}\label{renorpb}
 	-\nabla \cdot(H(p)\nabla p)+  \vert  H(p) \vert     \:  \nabla \cdot V   \leq    -H'(p)  \: \vert \nabla p\vert^2  +f\: H(p)  \quad \hbox{ in }\D'((0,T)\times \overline \Omega). 
 \end{equation}
 These  formulations improve the description of $p$ with respect to the values of $p,$ and yield renormalized formulations for $p.$  As we see in Section 2, the case where $H$ approximates   $\spo$ is crucial for the proof of uniqueness. The general case  may serve out    other qualitative description of the solution.  The  proof of \eqref{renorp} and  \eqref{renorpb}   follows more or less similar arguments  as for the proof  Lemma \ref{leqp+}.

 \item[]   
\item   Coming back to the approximation \eqref{psteps}, thanks to Proposition \ref{extsst}, we know that weak solutions of the stationary problem of  \eqref{psteps}    depend weak-continuously on the non-linearity $\heps.$  For the evolution problem,   working as in the proof of Proposition \ref{extsst}, one can prove that  the problem 	 
				\begin{equation}
					\label{cmefeps}
					\left\{  \begin{array}{ll}\left.
						\begin{array}{l}
							\displaystyle \frac{\partial u }{\partial t}  -\Delta p +\nabla \cdot (u  \: V)= f \\
							\displaystyle u=\heps(p)\end{array}\right\}
						\quad  & \hbox{ in } Q \\  
						\displaystyle p= 0  & \hbox{ on }\Sigma_D \\  
						\displaystyle (\nabla p-u  \: V)\cdot \nu = 0  & \hbox{ on }\Sigma_N \\   
						\displaystyle  u (0)=u _0 &\hbox{ in }\Omega,\end{array} \right.
				\end{equation}
				has a unique weak solution $(u_\eps,p_\eps)\in \C([0,T),L^1(\Omega))\times L^2(0,T;H^1_D(\Omega)),$ with $u_\eps = \heps(p_\eps)$ a.e. in $Q,$ and as $\eps \to 0,$ we have $	u_\eps \to u,$  in $ L^\infty(Q)-\hbox{weak}^*$  and $	p_\eps \to p, $ in $ L^2(0,T;H^1_D(\Omega)). $   We do believe that the convergence of $u_\eps$   remains to be true strongly in $L^1(\Omega),$ for both problems stationary and  evolution one. We did  not get into these further questions here. This need more   fine estimates  on $u$ (like $BV-$estimates).   We'll touch them in forthcoming works. The case where  one replace $\heps$ by an arbitrary non-linearity which converges to $\sign,$ in the sense of resolvent, like for instance the porous medium non-linearity $\varphi(r)= \vert r\vert^{m-1} r,$ for any $r\in \RR,$ with  $m\to \infty,$ is likely concerned.

		%		However, the statement of   Proposition \ref{extsst} is not enough to infer this type of continuous dependence. For instance, one needs to prove  $L^1-$compactness at the level of the stationary problem associated  with $\eps-$Euler implicit discretization problem. 

 	\item[] 
 \item It is clear that our approach is based on a renormalization process  à la DiPerna-Lions. This corroborates that this concept is still  fruitful for the uniqueness of weak solution (solution in the sense of distribution) for nonlinear versions of Fokker-Planck equation of the type \eqref{pdetype}. As we quote in the previous remarks, operating with  the absolute continuity of weak solution in  the set of probability equipped with Wasserstein distance   seems to be  promising (cf. \cite{DM}) even if it is actually hampered by conceptual  technical assumptions like the sign of the solution, conservation of the mass and   monotone   transport field (connected to some kind of convexity conditions). 
 We do not overlook the $L^1-$kinetic approach, which was developed in \cite{PerthameChen} for Cauchy problems of general degenerate parabolic-hyperbolic equations with non-isotropic nonlinearity. The approach could be applied  to more general situations and, as  far as we know, this approach  has not yet been explored for  problems of the type \eqref{pdetype} and \eqref{pdetype+}.     At last let us quote here the very recent work of \cite{Naomi}  (that we  learned just when we finish the preparation of this paper through  Benoit Perthame) where the authors deal among other questions with the uniqueness of weak solutions for the one phase problem \eqref{pdetype+} in $\RR^N,$  by means of Hilbert's duality method. This method turns up to be very restrictive since they need the vector filed $V=\nabla \Phi$ to be smooth enough (at least such that $\nabla \Delta \Phi$ is a $L^{{12}/{5}}_{loc}-$Lebesgue function).

  \item Under the same conditions on $V,$ existence and uniqueness of weak solution remains to be true if the source 
  term is replaced by a reaction terms of the form $g(t,x,u)$ satisfying reasonable assumptions (including continuous Lipschitz dependence with respect to $u$). This can be extended also to the case where the reaction is in the form $g(.,u)G(p).$ These generalizations are treated separately in the paper  \cite{IgHSR}.

 \end{enumerate}

 \section*{Acknowledgments}  
We would like to thank Benoit Perthame for  informing us about the papers \cite{Figali}, \cite{LebrisLions} and \cite{Naomi}.  
 
 	\section{Appendix}
\setcounter{equation}{0}

The aim of this section is to complete the proof of Proposition \ref{PKato} with the proof of   de-doubling variable process concerning the vector field part of  \eqref{dedouble}, which is 
	$$\lim_{\lambda \to 0 } \int_\Omega \!\!\int_\Omega     u(t,t,x,y)^+    \:    (V(x) -V(y))   \cdot \nabla_y  \zeta_\lambda\: dxdy.$$ 
Remember that 
	\begin{equation}    
	\zeta_\lambda  (x,y) = \xi\left( \frac{x+y}{2}\right) \delta_\lambda \left( \frac{x-y}{2}\right),
\end{equation} 
for any $x,y\in \Omega,$  where  $\xi\in \D(\Omega)$  and   $\delta_\lambda$ is a  sequence  of    mollifiers in  $\RR^N$ given by 
 \begin{equation} \label{molifiers}
 \delta_\lambda (x)=\frac{1}{\lambda ^N}  \delta\left (\frac{\vert x\vert ^2}{\lambda^2 }\right),\quad \hbox{ for any }x\in \RR^N ,\end{equation}
  where $0\leq \delta \in \D(0,1)$ and $\frac{1}{2^N}  \int \delta(\vert z\vert^2)\: dz =1  .$     A typical example is given by $ \delta (r)=   \: Ce^{\frac{-1}{1-r ^2 } }\chi_{[\vert r\vert \leq 1]},$ for any $r\in\RR^N.  $

 %Lemma
\begin{lemma} \label{Ltech1}
	For  any $h\in L^1((0,T)\times \Omega\times \Omega)$,   $V\in W^{1,1}(\Omega)$ and $\psi\in \D(0,T),$   we have 	   
	$$\lim_{\lambda \to 0 } \int_0^T\!\!  \int_\Omega\!\!  \int_\Omega    h(t,x,y) \:    (V(x) -V(y))   \cdot \nabla_y  \zeta_\lambda(x,y)  \:   \psi(t)\: dtdxdy$$
	\begin{equation} \label{inqtech1}
		=  	    \int_0^T\!\!\int_\Omega   h(x,x)   \:        \nabla \cdot V(x) \: \xi(x)\:  \psi(t)\:  dtdx .
	\end{equation}   
\end{lemma}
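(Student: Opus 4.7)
The natural move is to change variables to $u=(x+y)/2$, $v=(x-y)/2$, under which $\zeta_\lambda(x,y)=\xi(u)\delta_\lambda(v)$ and the chain rule gives
\[
\nabla_y \zeta_\lambda = \tfrac{1}{2}\nabla\xi(u)\,\delta_\lambda(v) - \tfrac{1}{2}\xi(u)\,\nabla\delta_\lambda(v).
\]
Setting $\tilde h(t,u,\sigma):=h(t,u+\sigma,u-\sigma)$, absorbing the constant Jacobian into a dimensional factor $c_N$, and finally rescaling $v=\lambda s$ in both resulting pieces, the left-hand side of \eqref{inqtech1} decomposes as $c_N(A_\lambda - B_\lambda)$ with
\[
A_\lambda = \int \tilde h(t,u,\lambda s)\bigl(V(u+\lambda s)-V(u-\lambda s)\bigr)\cdot\nabla\xi(u)\,\delta_1(s)\,\psi(t)\, dt\,du\,ds,
\]
\[
B_\lambda = \int \tilde h(t,u,\lambda s)\,\tfrac{V(u+\lambda s)-V(u-\lambda s)}{\lambda}\cdot\xi(u)\,\nabla\delta_1(s)\,\psi(t)\, dt\,du\,ds.
\]
My plan is to prove $A_\lambda\to 0$ and that $B_\lambda$ produces the full right-hand side of \eqref{inqtech1}.

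For $A_\lambda$: since $V\in W^{1,1}_{\mathrm{loc}}(\Omega)^N$, the $L^1$-continuity of translation gives $V(\cdot+\lambda s)-V(\cdot-\lambda s)\to 0$ in $L^1_{\mathrm{loc}}(du)$ for each fixed $s$; combined with $\tilde h(t,u,\lambda s)$ being $L^1(dt\,du)$-bounded for a.e.~$s$ (Fubini) and the compactly supported integrable kernel $\delta_1(s)$, dominated convergence yields $A_\lambda\to 0$.

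For $B_\lambda$: the crucial ingredient is the a.e.-in-$s$ convergence of the symmetric difference quotient
\[
\frac{V(u+\lambda s)-V(u-\lambda s)}{\lambda}\;\longrightarrow\; 2\,\nabla V(u)\, s
\]
in $L^1_{\mathrm{loc}}(du)$ as $\lambda\to 0$. This is trivial for $V\in C^1$ by Taylor; the extension to $V\in W^{1,1}_{\mathrm{loc}}$ is obtained by smoothing $V_\epsilon = V*\rho_\epsilon$ and passing to $\epsilon\to 0$ using the uniform bound $\Vert (V(\cdot+h)-V(\cdot-h))/|h|\Vert_{L^1(\omega)}\leq 2\Vert\nabla V\Vert_{L^1(\omega')}$ available for Sobolev functions. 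Simultaneously $\tilde h(t,u,\lambda s)\to h(t,u,u)$ in $L^1(dt\,du)$ for a.e.~$s$ by approximating $h\in L^1$ by continuous functions. Dominated convergence then yields
\[
B_\lambda\to 2\int h(t,u,u)\,\xi(u)\,\psi(t)\int\bigl(\nabla V(u)\,s\bigr)\cdot\nabla\delta_1(s)\,ds\,dt\,du.
\]
Integration by parts in $s$, $\int s_j\,\partial_i\delta_1\,ds = -\delta_{ij}\int\delta_1$, collapses the inner integral to $-(\int\delta_1)\,\nabla\cdot V(u)$, and with the chosen normalization of $\delta$ the resulting constants combine into $1$, recovering \eqref{inqtech1}.

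The only genuinely non-elementary step is the convergence of the symmetric difference quotient for $V$ merely in $W^{1,1}_{\mathrm{loc}}$: this is precisely the DiPerna--Lions regularization/commutator circle of ideas already invoked in the proof of Lemma \ref{lrenormal}. Everything else is dominated convergence, Lebesgue-point arguments, and integration by parts against a fixed compactly supported mollifier; the bookkeeping of constants is routine and enforced by the normalization condition on $\delta$.
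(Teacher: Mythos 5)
Your argument is correct and follows essentially the same route as the paper's proof in the Appendix: split $\nabla_y\zeta_\lambda$ into the $\nabla\xi$-term (which vanishes in the limit) and the $\nabla\delta_\lambda$-term, rescale the transverse variable, pass to the limit in the difference quotient of the $W^{1,1}$ field $V$, and use the symmetry of the mollifier to extract $\nabla\cdot V$. The only cosmetic differences are that you work with the symmetric difference quotient and conclude via the integration-by-parts identity $\int s_j\,\partial_i\delta_1\,ds=-\delta_{ij}\int\delta_1$, which settles the normalization for an arbitrary mollifier, whereas the paper uses the one-sided quotient and evaluates $\int\vert z\vert^2\delta'(\vert z\vert^2)\,dz$ explicitly for its radial kernel.
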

%%%%%%%% 
 	\begin{proof} We see that
 		$$(\nabla_x + \nabla_y)\zeta = \delta_\lambda \left( \frac{x-y}{2}\right) \nabla  \xi\left( \frac{x+y}{2}\right)  .  $$
 		Since, for any $\eta\in \RR^N,$  
 		\begin{eqnarray}
 			\nabla \delta_\lambda  (\eta) &=&  \frac{2}{\lambda^{N}}   \frac{z}{\lambda^2}    \delta' \left( \frac{  \vert  z\vert ^2}{\lambda^2}\right) ,
 		\end{eqnarray}
 	  we have
 		\begin{eqnarray}
 			\nabla  \delta_\lambda \left( \frac{x-y}{2}\right) &=& \frac{2}{\lambda^{N}}   \frac{x-y}{2\lambda^2}    \delta' \left( \frac{ \left  \vert  x-y\right \vert ^2}{(2\lambda)^2}\right) ,\quad \hbox{ for any }x,y\in \Omega  .
 		\end{eqnarray}
 		So,  
 		\begin{eqnarray}
 			\nabla_y \zeta_\lambda (x,y) &=&   \frac{1}{2}\nabla \xi\left( \frac{x+y}{2}\right) \delta_\lambda \left( \frac{x-y}{2}\right)
 			- \frac{1}{2}  \xi\left( \frac{x+y}{2}\right) \nabla \delta_\lambda \left( \frac{x-y}{2}\right)   \\  \\  &=&
 			\nabla \xi\left( \frac{x+y}{2}\right) \delta_\lambda \left( \frac{x-y}{2}\right)  -  \frac{1}{\lambda^{N+1} } \xi \left( \frac{x+y}{2}\right)          \delta' \left( \frac{ \left  \vert  x-y\right \vert ^2}{(2\lambda)^2}\right)    \: \frac{x-y}{2\lambda }
 		\end{eqnarray}
 		and
 		\begin{eqnarray}
 			(V(x) -V(y))   \cdot \nabla_y  \zeta
 			&=& \frac{1}{2}   (V(x) -V(y))   \cdot   \nabla \xi\left( \frac{x+y}{2}\right) \delta_\lambda \left( \frac{x-y}{2}\right)  \\  \\
 			&  &-  \frac{1}{\lambda^{N} } \xi\left( \frac{x+y}{2}\right)      \delta' \left( \frac{ \left  \vert  x-y\right \vert ^2}{(2\lambda)^2}\right)    \frac{ V(x) -V(y)}{\lambda }  \cdot  \frac{x-y}{2\lambda }       .
 		\end{eqnarray}
 		This implies that
 		$$ \lim_{\lambda \to 0 } \int\!\!\int    h(x,y) \:    (V(x) -V(y))   \cdot \nabla_y  \zeta \: dxdy $$
 		$$= \lim_{\lambda \to 0 }  -  \frac{1}{\lambda^{N} } \int\!\!\int   \xi\left( \frac{x+y}{2}\right)   \delta' \left( \frac{ \left  \vert  x-y\right \vert ^2}{(2\lambda)^2}\right)      \frac{ V(x) -V(y)}{\lambda }  \cdot  \frac{x-y}{2\lambda }         \: dxdy =:  I(\lambda) .
 		$$
 		Changing the variable by setting
 		$$z= \frac{  x-y  }{2\lambda }   $$
 		we get $x= y+ 2\lambda z$ and $ dx=(2\lambda)^N\: dz ,$ so that
 		\begin{eqnarray}
 			-  I(\lambda) &=& 2^N   \int\!\!\int    h((y+2\lambda z,y)  \: \xi\left( y+\lambda z\right)    \delta'   \left( \vert z\vert ^2\right  )
 			\:      \frac{ V(y+2\lambda z) -V(y)}{\lambda }  \cdot  z\:    dzdy \\  \\
 			&=&
 			2^N     \sum_{i=1}^N      \int\!\!\int  h((y+2\lambda z,y) \: \xi\left( y+\lambda z\right)   \delta'   \left( \vert z\vert \right  )      \frac{ V_i(y+2\lambda z) -V_i(y)}{\lambda } \:     z_i\:
 			\: dzdy.
 		\end{eqnarray}
 		Letting $\lambda \to 0,$ we get
 		\begin{eqnarray}
 			\lim_{\lambda \to 0 } I(\lambda)  &=&  -2^{N+1}\:  \sum_{i=1}^N   \int\!\!\int   \xi(y)\: h((y,y) \:    \nabla V_i (y)\cdot z \: z_i    \delta'  (\vert z\vert ^2 )  \: dzdy \\  \\
 			&=& - 2^{N+1}\:  \sum_{i,j=1}^N   \int\!\!\int   \xi(y)\: h((y,y)  \:     \frac{\partial  V_i (y)}{\partial y_j} \: z_j \: z_i    \delta'  (\vert z\vert ^2 )  \: dzdy \\  \\
 			&=&  -2^{N+1}\:  \sum_{i=1}^N   \int\!\!\int   \xi(y)\: h((y,y)  \:     \frac{\partial  V_i (y)}{\partial y_j} \:   z_i^2     \delta'  (\vert z\vert ^2 )  \: dzdy
 			\\  \\
 			& = & - \frac{  2^{N+1}\:}{N}\:     \int   \xi(y)\:  h((y,y)   \:   \nabla \cdot V (y) \: d  y \:\int    \vert z\vert^2 \:   \delta'  (\vert z\vert ^2 )   \: dz  ,
 		\end{eqnarray}
 		where we use the fact that  (since $z\in B(0,1) \to \delta(\vert z\vert^2)$ is symmetric)
 		$$\int    {z_j \: z_i}  \delta' (\vert z\vert ^2)   \: dz =0,\quad \hbox{ for any }i\neq j $$
 		and
 		$$ \sum_{i=1}^N  \int    { z_i}^2 \delta' (\vert z\vert^2 )   \: dz = N \int    { z_1}^2 \delta' (\vert z\vert^2 )   \: dz =   \int    {\vert   z \vert}^2 \delta' (\vert z\vert^2 )   \: dz .  $$
 		At last, we use the fact that
 		$$\frac{  2^{N+1}\:}{N}\: \int    \vert z\vert^2 \:   \delta'  (\vert z\vert ^2 )   \: dz  =-1.$$
 		Indeed, recall that
 		$$  \int    \vert z\vert^2 \:   \delta'  (\vert z\vert ^2 )   \: dz = \int_0 ^\infty \int_{S^{N-1}}   \vert r\theta \vert^2 \:   \delta'  (\vert r\theta \vert ^2 )      r^{N-1} dr\: ds_{N-1} (\theta),$$
 		where $s_{N-1}$ is the probability measure defined on the sphere $S_{N-1}$   by
 		$$ s_{N-1}(A
)= \left\vert \left\{ rx\: :\:  r\in [0,1],\:  x\in A     \right  \} \right \vert.$$
 		This implies that
 		\begin{eqnarray}
 			\int    \vert z\vert^2 \:   \delta'  (\vert z\vert ^2 )   \: dz &=&\vert B(0,1)\vert \:  \int_0 ^1      r   ^2 \:   \delta'  (  r    ^2 )      r^{n-1} dr \\  \\
 			&=&\vert B(0,1)\vert \:  \int_0 ^1   \delta'  (  r    ^2 )      r^{n+1} dr\\  \\
 			&=&  - \frac{N}{2}\int  \delta (\vert z\vert ^2 )\: dz.
 		\end{eqnarray}
 		Thus the result.
 	\end{proof}

 	\vspace*{10mm}

 	 \end{document}